\newtheorem{thm}{Theorem}[section]
\newtheorem{lem}[thm]{Lemma}
\newtheorem{cor}[thm]{Corollary}
\newtheorem{prop}[thm]{Proposition}
\theoremstyle{definition}
\newtheorem{defin}[thm]{Definition}
\newtheorem{rem}[thm]{Remark}
\newtheorem{remark}[thm]{Remark}
\newtheorem{remarks}[thm]{Remarks}
\newtheorem{thmAlfa}{Theorem}
\def\H{{\mathbb H}}
\def\J{{\mathbb J}}
\def\N{{\mathbb N}}
\def\M{{\mathbb M}}
\def\L{{\mathbb L}}
\def\R{{\mathbb R}}
\def\Z{{\mathbb Z}}
\def\cT{{\mathcal T}}
\def\cE{{\mathcal E}}
\def\cF{{\mathcal F}}
\def\cC{{\mathcal C}}
\def\cG{{\mathcal G}}
\newcommand{\keq}{\!=\!}
\newcommand{\kleq}{\!\leq\!}
\newcommand{\kgr}{\!>\!}
\newcommand\kin{\!\in\!}
\newcommand{\ksubset}{\!\subset\!}
\newcommand{\kplus}{\!+\!}
\newcommand{\lb}{\bar l}
\newcommand{\nb}{\bar n}
\newcommand{\mb}{\bar m}
\newcommand{\ub}{\bar u}
\newcommand{\vb}{\bar v}
\def\vp{\varepsilon}
\newcommand{\xt}{{\tilde x}}
\newcommand{\cof}{\text{\rm cof}}
\newcommand{\spa}{\text{\rm span}}
\newcommand{\supp}{\text{\rm supp}}
\newcommand{\ie}{{\it i.e.,\ }}
\newcommand{\co}{\mathrm{c}_0}
\newcommand{\diam}{\text{\rm diam}}
\newcommand{\Lip}{\text{\rm Lip}}
\begin{document}

\allowdisplaybreaks

\title{A new coarsely rigid class of Banach spaces}

\author{F.~Baudier}
\address{F.~Baudier, Department of Mathematics, Texas A\&M University, College Station, TX 77843, USA}
\email{florent@math.tamu.edu}

\author{G.~Lancien}
\address{G.~Lancien, Laboratoire de Math\'ematiques de Besan\c con, Universit\'e Bourgogne Franche-Comt\'e, 16 route de Gray, 25030 Besan\c con C\'edex, Besan\c con, France}
\email{gilles.lancien@univ-fcomte.fr}

\author{P.~Motakis}
\address{P.~Motakis, Department of Mathematics, University of
Illinois at Urbana-Champaign, Urbana, IL 61801, U.S.A.}
\email{pmotakis@illinois.edu}

\author{Th.~Schlumprecht}
\address{Th.~Schlumprecht, Department of Mathematics, Texas A\&M University, College Station, TX 77843-3368, USA, and Faculty of Electrical Engineering,
Czech Technical University in Prague, Zikova 4, 16627, Prague, Czech Republic}
\email{schlump@math.tamu.edu}

\thanks{The first named author was supported by the National Science
Foundation under Grant Number DMS-1800322.
The second named author was supported by the French
``Investissements d'Avenir'' program, project ISITE-BFC (contract
 ANR-15-IDEX-03).
The third named author was  supported by the National Science Foundation
under Grant Numbers DMS-1600600 and DMS-1912897.
The fourth named author was supported by the National Science Foundation under Grant Numbers DMS-1464713 and DMS-1711076 .}
\keywords{Coarse embeddings, Coarsely rigid classes of Banach spaces, Asymptotic-$c_0$ spaces, Hamming graphs}
\subjclass[2010]{46B06, 46B20, 46B85, 46T99, 05C63, 20F65}

\begin{abstract} We prove that the class of reflexive asymptotic-$c_0$
Banach spaces is coarsely rigid, meaning that if a Banach space $X$
coarsely embeds into a reflexive asymptotic-$c_0$ space $Y$, then $X$
is also reflexive and asymptotic-$c_0$. In order to achieve this result
we provide a purely metric characterization of this class of Banach spaces. This metric characterization takes the form of a concentration inequality for Lipschitz maps on the Hamming
graphs, which is rigid under coarse embeddings. Using an example of a quasi-reflexive asymptotic-$c_0$ space,
we show that this concentration inequality is not equivalent to the
non equi-coarse embeddability of the Hamming graphs.
\end{abstract}

\maketitle

 \setcounter{tocdepth}{1}
\tableofcontents
\section{Introduction}\label{S:1}

The concept of rigidity for a class of mathematical objects has permeated
mathematical fields. A prime example of a rigidity problem arose in geometric
group theory. Take a finitely generated group $\Gamma$ which is an algebraic
object. One can apprehend $\Gamma$ in the category of metric spaces by looking
at its Cayley graph. Then, a fundamental aspect of
Gromov's geometric group theory program \cite{Gromov1984} is to understand how
much of the algebraic properties of a group one can recover knowing solely the
large scale shape of its Cayley graph. A class $\cG$ of groups is said to be
quasi-isometrically rigid if every group that is quasi-isometric to a group in
$\cG$ is actually virtually isomorphic to a group in $\cG$. A quasi-isometric
embedding is what we call a coarse-Lipschitz embedding in this paper
(see all the relevant definitions of non linear embeddings in
subsection \ref{SS:2.3}). It is quite remarkable that many classes of groups
are known to be quasi-isometrically rigid: free groups, hyperbolic groups,
amenable groups, and we refer to \cite{Kapovich2014} for a detailed list.

In this work, we provide a Banach space analogue of this type of results.
A class $\cC$ of Banach spaces is called {\em coarsely rigid} if it follows
from $Y$ being a member of $\cC$ and $X$ being coarsely embedded into $Y$, that
$X$ is also in $\cC$. Let us insist on the fact that coarse embeddings are very weak embeddings.
Indeed, it is classical that $L_1$ coarsely embeds into $\ell_2$ (while it does not coarse-Lipschitz embed). On the other hand Nowak \cite{Nowak2006} showed that for any $p\in [1,\infty)$, $\ell_2$ coarsely embeds into $\ell_p$. This was extended by Ostrovskii \cite{Ostrovskii2009} who proved that $\ell_2$ coarsely embeds into any Banach space with an unconditional basis and of non-trivial cotype. On a more elementary level, note that $\R$ coarse-Lipschitz embeds into $\Z$. Therefore coarsely rigid classes are rare. The class of spaces that coarsely embed into a fixed metric space $(M,d)$,
or the class of  spaces in which a fixed $(M,d)$ does not coarsely embed are
clearly coarsely rigid. It is for instance rather simple to see that a Banach
space $X$ has dimension less than $n \in \N$ if and only if the integer grid
$\Z^n$ equipped with the $\ell_1$ metric does not coarsely embed into $X$.
Besides such simple coarsely rigid classes, very few rigidity results have
been obtained so far. Let us describe three important examples. Randrianarivony
showed in \cite{Randrianarivony2006} that a quasi-Banach space $X$ coarsely
embeds into a Hilbert space if and only if there is a probability space
$(\Omega,B, \mu)$ such that $X$ is linearly isomorphic to a subspace of
$L_0(\Omega,B, \mu)$.
This clearly describes a class of quasi-Banach spaces that is coarsely rigid.
Then, a major achievement
by Mendel and Naor \cite{MendelNaor2008} was a  purely metric extension of the
linear notion of Rademacher cotype.
 Using that notion of  {\em metric cotype} they were able to show that within the
 class of Banach spaces with non-trivial type the class
 $\{X\colon \inf\{q'\ge2\colon X \textrm{ has Rademacher cotype }q'\}\le q\}$
 is coarsely rigid. It is still unclear and important to understand whether
 the non-trivial type restriction is necessary. Another important  rigidity
 result was obtained by Kalton \cite{Kalton2007}. Indeed,
 he  showed that, within the class of Banach spaces that do not have
 $\ell_1$-spreading models (or equivalently spaces with the alternating
 Banach-Saks property), the class of reflexive Banach spaces is coarsely rigid.
 It then follows from an ultraproduct argument that, within the class of Banach
 spaces with non-trivial type, the class of super-reflexive Banach spaces is
 coarsely rigid. Since $\ell_1$ coarsely embeds into $\ell_2$ we need at
 least to exclude spaces which contain $\ell_1$, to obtain both conclusions.
 The last papers of N. Kalton (\cite{Kalton2007} among others, see also the survey
 \cite{GLZ2014} and references therein) show that asymptotic structures
 of Banach spaces often provide linear properties that are invariant under coarse or
 coarse-Lipschitz embeddings. Our work follows this program: studying the links between asymptotic
 structures and large scale geometry of Banach spaces.

In this article we exhibit a new example of an unrestricted class of
infinite dimensional Banach spaces that is coarsely rigid. The notion of an
{\em asymptotic-$c_0$ space} will be recalled
in Section \ref{S:3}.


\begin{thmAlfa}\label{thmA} Let $Y$ be a reflexive asymptotic-$\co$ Banach space.
If $X$ is a Banach space that coarsely embeds into $Y$, then $X$ is also reflexive and asymptotic-$\co$.
\end{thmAlfa}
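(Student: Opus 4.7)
The plan is to extract a purely metric invariant of Banach spaces that (i) is satisfied by every reflexive asymptotic-$\co$ space, (ii) is stable under coarse embeddings, and (iii) forces reflexivity together with the asymptotic-$\co$ property. In view of the fact that the Hamming graphs $([\N]^k,d_H)$ (words of length $k$ over $\N$ with the Hamming metric, or equivalently $k$-element subsets of $\N$ with the symmetric-difference metric) are the canonical test objects for $\co$-like asymptotic behavior, I would take the invariant to be a concentration inequality of the form: there exists $C>0$ (depending only on $Y$) such that for every $k$, every Lipschitz map $f\colon ([\N]^k,d_H)\to Y$, and every infinite $M\subset\N$, some infinite $M'\subset M$ satisfies $\diam f([M']^k)\le C\, \Lip(f)$, with $C$ \emph{independent of $k$}. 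The proof of Theorem A would then factor as three implications: (a) reflexive asymptotic-$\co$ $\Rightarrow$ the concentration inequality; (b) concentration is inherited through coarse embeddings; (c) the concentration inequality $\Rightarrow$ reflexivity and asymptotic-$\co$.

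Step (a) is the heart of the matter. Starting from a Lipschitz $f\colon [\N]^k\to Y$, I would iteratively apply Ramsey's theorem along with weak compactness of bounded sets in the reflexive space $Y$ to stabilize, at each coordinate, weak limits of the ``edge increments'' $f(\bar n\cup\{j\})-f(\bar n)$. This produces a subset $M$ on which every value $f(\bar n)$, $\bar n\in[M]^k$, decomposes as a centered value plus a telescoping sum of weakly null increments indexed by a tree of height $k$. The asymptotic-$\co$ property of $Y$, applied to the subsequent block sequences emerging from these weakly null branches, controls the norm of a length-$k$ sum by the supremum of its coordinates rather than by a growing $\ell_p$-type quantity; a further Ramsey pruning adapts the subset to the finite-dimensional $\co$-game inherent in the definition of asymptotic-$\co$, yielding the desired $k$-independent constant.

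Step (b) is a standard rescaling argument: if $\phi\colon X\to Y$ is coarse with lower modulus $\rho$ and upper modulus $\omega$, then for any Lipschitz $g\colon [\N]^k\to X$ the composition $\phi\circ(t g)$ is Lipschitz into $Y$ for suitable scales $t$, so concentration in $Y$ and inversion of $\rho$ give concentration in $X$. Step (c) would go by contrapositive: if $X$ is not reflexive, then (using Kalton's results recalled in the introduction, or a direct James-type argument producing an $\ell_1$- or $\co$-spreading model) one constructs explicit $1$-Lipschitz maps on the $[\N]^k$ with $\diam f([M]^k)$ growing with $k$, and similarly, if $X$ is not asymptotic-$\co$, its asymptotic structure witnesses a lower $\ell_p$-type summation inequality on large block sequences, which transcribes into Lipschitz maps on $[\N]^k$ whose image on any infinite $[M']^k$ has diameter of order $k^{1/p}$, contradicting concentration. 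The principal obstacle I expect lies in step (a): coordinating the Ramsey/weak-compactness pruning with the finite-dimensional $\co$-admissibility condition so that the resulting branches actually form sequences to which the asymptotic-$\co$ inequality applies, and doing so with a constant independent of $k$.
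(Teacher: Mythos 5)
Your overall architecture is the paper's: a $k$-independent concentration inequality for Lipschitz maps on the Hamming graphs $([\N]^k,d^{(k)}_\H)$ that characterizes ``reflexive and asymptotic-$c_0$'' (Theorem B), plus transfer through a coarse embedding by rescaling (the paper's Corollary \ref{C:4.4}, where $h(\mb)=g(\Lip(f)^{-1}f(\mb))$ and the graph metric gives $\Lip(h)\le\omega_g(1)$). Your step (a) is also essentially the paper's proof: Lemma \ref{L:4.1} extracts, via weak compactness and a diagonal/Ramsey extraction, a weakly null tree $(y_{\mb})$ with $f(\mb)=y_\emptyset+\sum_{i=1}^k y_{\{m_1,\dots,m_i\}}$ and $\|y_{\mb}\|\le\Lip(f)$, and Lemma \ref{L:4.2} runs the $k$-round asymptotic game to bound $\|\sum_{i\in F}y_{\{m_1,\dots,m_i\}}\|$ by $(C+1+\vp)\max_i\|y_{\{m_1,\dots,m_i\}}\|$; the obstacle you flag there is real but surmountable exactly as you sketch.

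The genuine gap is in step (c), in the implication ``not asymptotic-$c_0$ $\Rightarrow$ no concentration.'' You claim that failure of asymptotic-$c_0$ ``transcribes into Lipschitz maps on $[\N]^k$'' via the asymptotic structure, but the asymptotic structure is a \emph{tree} (equivalently, game) notion: it produces weakly null trees $(x_{\nb}:\nb\in[\N]^{\le k})$ whose branches are far from $\ell_\infty^k$. The natural map $f(\mb)=\sum_{i=1}^k x_{\{m_1,\dots,m_i\}}$ built from a tree is \emph{not} Lipschitz for $d^{(k)}_\H$: changing a single coordinate $m_i$ changes all subsequent vectors along the branch, so a Hamming distance of $1$ can move the image by order $k$. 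To get a $1$-Lipschitz map on the Hamming graph one needs the $i$-th summand to depend only on $m_i$, i.e.\ one needs an \emph{array} $(x^{(i)}_j)$ and the map $f_k(\mb)=\tfrac12\sum_i x^{(i)}_{m_i}$; this is precisely the distinction between asymptotic structure and asymptotic models (Remarks \ref{Rem:3.11} and \ref{R:7.5}). The paper bridges this gap with the theorem of Freeman--Odell--Sari--Zheng (Theorem \ref{T:3.6}), after reducing to the separable case by Proposition \ref{P:3.3}: if $X$ is reflexive (hence contains no $\ell_1$) and not asymptotic-$c_0$, some $1$-suppression unconditional asymptotic model $(e_i)$ generated by a normalized weakly null array is not equivalent to the $c_0$ basis, so $\lambda_k=\|\sum_{i=1}^k e_i\|\to\infty$, and the array map $f_k$ has $\diam(f_k([\M]^k))\gtrsim\lambda_k$ on \emph{every} $\M\in[\N]^\omega$ (using unconditionality and a Hahn--Banach argument). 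Without this ingredient your step (c) does not go through; also, your quantitative claim of a $k^{1/p}$ lower bound is unwarranted, since failure of asymptotic-$c_0$ gives no $\ell_p$-type lower estimate, only that $\lambda_k\to\infty$ along the model. Your reflexivity half of (c) is fine and matches the paper's use of James' characterization.
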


Since there are reflexive asymptotic-$\co$ spaces, like Tsirelson's original space $T^*$ \cite{Tsirelson1974}, which will be recalled later, Theorem \ref{thmA} immediately implies the main result from \cite{BaudierLancienSchlumprecht2018}, where the existence of an infinite dimensional Banach space that does not coarsely contain $\ell_2$ is proved.
Our proof of Theorem \ref{thmA}, which is carried out in Section \ref{S:4}, follows
from the following purely metric characterization of the linear property of being
``reflexive and asymptotic-$c_0$'' in terms of a concentration inequality for Lipschitz maps
on the Hamming graphs (see the definition and notation in subsection \ref{SS:2.2}).

\begin{thmAlfa}\label{thmB}
A Banach space $X$ is reflexive and asymptotic-$c_0$ if and only if there exists $C\geq 1$
such that for every $k\in\N$ and every Lipschitz map $f:\big([\N]^k,d^{(k)}_\H\big)\to X$ there exists $\M\in[\N]^\omega$ so that
\begin{equation*}
\diam\big( f([\M]^k)\big) \leq C\mathrm{Lip}(f).
\end{equation*}
\end{thmAlfa}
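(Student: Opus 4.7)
The plan is to prove both implications separately, following the general philosophy of Kalton--Randrianarivony-type arguments combined with the asymptotic structure theory of Banach spaces.

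For the forward implication, assume $X$ is reflexive and asymptotic-$\co$, and let $f:[\N]^k\to X$ be Lipschitz with $L=\Lip(f)$. The idea is to iteratively apply weak compactness (afforded by reflexivity) together with a Ramsey stabilization at each of the $k$ coordinates, to extract $\M\in[\N]^\omega$ along which $f$ admits a canonical decomposition
\[
f(\bn)=y_{0}+\sum_{i=1}^{k}z_{i}(n_{1},\ldots,n_{i}),
\]
where each increment $z_{i}(n_{1},\ldots,n_{i})$ arises as the difference between $f$ and its weak limit as the $i$-th coordinate tends to $+\infty$ (with later coordinates passing to infinity afterwards). A single-coordinate change in $\bn$ produces a difference of increments whose norm is bounded by $L$. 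By passing to a further infinite subset $\M'\ksubset\M$ with sufficiently fast-growing gaps, one ensures that for any $\bn\kin[\M']^{k}$ the sequence $(z_{i}(n_{1},\ldots,n_{i}))_{i=1}^{k}$ is an admissible approximate block sequence in $X$. The asymptotic-$\co$ inequality then yields a sup-type estimate for $\|\sum_{i=1}^k(z_i(\bn)-z_i(\bm))\|$, forcing $\|f(\bn)-f(\bm)\|\le CL$ for all $\bn,\bm\kin[\M']^{k}$.

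For the converse, I would argue by contrapositive and split into two cases. If $X$ is not reflexive, then using either an $\ell_{1}$-spreading model (via Rosenthal's $\ell_{1}$ theorem) or a weakly Cauchy but non-weakly-convergent basic sequence $(x_{n})\ksubset X$ (via James), the map $f(\bn)=\sum_{i=1}^{k}x_{n_{i}}$, or a biorthogonal-weighted variant, is bounded-Lipschitz in the Hamming metric yet has diameter tending to infinity with $k$ on every infinite subset, ruling out a uniform constant $C$. If $X$ is reflexive but not asymptotic-$\co$, then for arbitrarily large constants there exist admissible block sequences $(e_{j}^{(i)})_{j=1}^{k}$ in $X$ whose sum norms greatly exceed their maxima; indexing such sequences along $\N$ and forming $f(\bn)=\sum_{i=1}^{k}e_{n_{i}}^{(i)}$ produces a Lipschitz counterexample, since passing to an infinite subset does not reduce the failure of the sup-estimate.

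The main obstacle, in my view, lies in the forward direction: one must carry out the weak-limit extractions at every level $1\le i\le k$ simultaneously with a Ramsey stabilization so that the resulting increments $z_{i}(n_{1},\ldots,n_{i})$ genuinely form an admissible block tuple for the definition of asymptotic-$\co$ (so that the ``tail supports'' grow fast enough relative to $\bn$), while the constant $C$ produced at the end remains independent of $k$. The latter uniformity is exactly what the asymptotic-$\co$ hypothesis, being a single sup-type estimate valid for arbitrarily long admissible tuples, delivers; but setting up the extraction so that the estimate is legitimately applicable to increments extracted from a general Lipschitz map requires delicate combinatorial and functional-analytic bookkeeping.
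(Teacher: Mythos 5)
Your forward direction follows the paper's route in outline (decompose $f$ along iterated weak limits into a weakly null tree of increments of norm at most $\Lip(f)$, then apply a sup-type estimate), but the mechanism you give for the second step does not make sense in the stated generality: a reflexive asymptotic-$\co$ space need not have a basis, so ``admissible approximate block sequences'' and ``fast-growing gaps'' have no meaning, and admissibility is a feature of Tsirelson's norm, not of the Maurey--Milman--Tomczak-Jaegermann definition used here. The correct device is the game description of the asymptotic structure (Remarks \ref{R:3.2}(c)): let the subspace player follow a winning strategy forcing $(C+\vp)$-equivalence to the $\ell_\infty^k$ basis, and use the fact that weakly null sequences almost lie in any finite-codimensional subspace (Hahn--Banach) to let the vector player pick unit vectors close to the \emph{normalized} increments after refining; one must also handle separately the branches along which increments have norms tending to $0$, as in the dichotomy of Lemma \ref{L:4.2}. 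These are repairable imprecisions, and the constant remains independent of $k$.

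The genuine gap is in your converse, in the case where $X$ is reflexive but not asymptotic-$\co$. Failure of asymptotic-$\co$ is witnessed by trees (equivalently, by the game), not by arrays: for each $k$ you get a weakly null tree of height $k$ with branches whose sums are large, but a map built from a tree, $f(\nb)=\sum_{i}x_{\{n_1,\ldots,n_i\}}$, is \emph{not} $1$-Lipschitz for $d^{(k)}_\H$, since changing a single coordinate $n_i$ changes every later vector on the branch; a Hamming--Lipschitz map of the form you propose, $f(\nb)=\sum_i e^{(i)}_{n_i}$, requires the $i$-th summand to depend only on $(i,n_i)$, i.e.\ an array whose diagonal selections all have large sums. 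There is no formal passage from ``not asymptotic-$\co$'' to the existence of such an array: this is exactly the tree-versus-array distinction of Remark \ref{Rem:3.11}, and for $\ell_p$ with $p\neq\infty$ the analogous implication is false. The paper closes this gap with the Freeman--Odell--Sari--Zheng theorem (Theorem \ref{T:3.6}), after a separable reduction (Proposition \ref{P:3.3}): if $X$ is separable, contains no $\ell_1$, and is not asymptotic-$\co$, then some asymptotic model generated by a normalized weakly null array is not equivalent to the $c_0$ basis, so $\lambda_k=\|\sum_{i=1}^k e_i\|\to\infty$, and $f_k(\nb)=\frac12\sum_i x^{(i)}_{n_i}$ is $1$-Lipschitz with $\diam(f_k([\M]^k))\geq \lambda_k/4$ for every $\M\in[\N]^\omega$ by weak nullness and Hahn--Banach. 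Without invoking this (or an equivalent tree-to-array result), your counterexample construction does not go through; your non-reflexive case via James's characterization is fine and matches the paper.
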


This concentration inequality was introduced in  \cite{BaudierLancienSchlumprecht2018} where
it was shown to hold for maps taking values into Tsirelson's original space $T^*$.
The space $T^*$ is the prototypical example of a separable reflexive asymptotic-$\co$ Banach space,
and the proof from \cite{BaudierLancienSchlumprecht2018} can be generalized to show that the same
concentration inequality holds for maps with values into any reflexive asymptotic-$\co$ Banach space.
The more conceptual approach undertaken in this article to prove that any reflexive asymptotic-$c_0$ Banach
space satisfies the above  metric concentration inequality, requires the central notion of asymptotic structure
from \cite{MaureyMilmanTomczak1995} which is described in section \ref{S:3}.
In order to prove the converse, the crucial step is to show that if a Banach space $X$ satisfies the metric
concentration inequality, then all its asymptotic models generated by weakly-null arrays are isomorphic to $\co$.
The notion of asymptotic models was introduced by Halbeisen and Odell in \cite{HalbeisenOdell2004}.
Then the conclusion follows from
an unexpected link between the notions {\em asymptotic structure} and {\em asymptotic models}
(see Section \ref{S:3}). Indeed, it  was proved by D. Freeman, E. Odell, B. Sari and B. Zheng \cite{FOSZ2017}
that a separable Banach space  which does not contain  a copy of  $\ell_1$ is asymptotic-$\co$ whenever all its
asymptotic models generated by weakly-null arrays are isomorphic to $\co$.

The concentration inequality in Theorem \ref{thmB} clearly prevents the
equi-coarse embeddability of the sequence of Hamming graphs. We show in section
\ref{quasi-reflexive} that the converse is not true.
More precisely, we provide an example of a non reflexive Banach space in which the Hamming graphs do not equi-coarsely embed.

\section{Preliminaries}\label{S:2}
\subsection{Trees}\label{SS:2.1}
For $k\kin \N$  we put $[\N]^{\le k}=\{ S\ksubset \N: |S|\kleq k\}$,   $[\N]^{ k}=\{ S\ksubset \N: |S|=k\} $, $[\N]^{<\omega}=\bigcup_{k\in \N}[\N]^{\kleq k}$,
 $[\N]^\omega\keq\{ S\ksubset \N: S\text{ infinite}\} $, and $[\N]\keq\{S:S\ksubset \N\}$. We always list the elements of some $\mb\in [\N]^{< \omega}$ or in $[\N]^{ \omega}$ in increasing order, \ie if we write $\mb=\{m_1,m_2,\ldots, m_l\}$ or  $\mb=\{m_1,m_2,m_3, \ldots \}$, we tacitly assume that $m_1<m_2<\ldots $. For $\mb=\{m_1,m_2,\ldots, m_r\}\in [\N]^{\le k}$ and $\nb=\{n_1,n_2,\ldots, n_s\}\in [\N]^{\le k}$  we write $\mb\prec \nb$, if $r<s\le k$ and  $m_i=n_i$, for $i=1,2,\ldots, r$, and we write $\mb\preceq \nb$ if $\mb\prec \nb$ or $\mb=\nb$. Note that $[\N]^{\le k}$, $k\in\N$ with $\prec$, are {\em rooted trees},  \ie partial orders with a unique minimal element, namely $\emptyset$, and the property that for each $\nb\in[\N]^{\le k}$,
the set of  {\em predecessors of $\nb$}  $\{\mb:\mb\prec \nb\}$ is finite and linearly ordered.

In this paper we will only consider trees of finite height. For a set $X$ we will call a family $(x_{\nb}:\nb\in[\N]^{\le k})$, for $k\in\N$, a {\em tree of height $k$}. Sometimes we are also considering  {\em unrooted trees of height $k$}, which are families of the form $(x_{\nb}:\nb\in[\N]^{\le k}\setminus\emptyset)$. We call  for $\nb\kin[\N]^k$ a sequence of the form $(x_{\mb}: \mb\preceq \nb)=(x_{\{n_1,n_2,\ldots, n_l\}})_{l=0}^k $ {\em a branch of the tree $(x_{\nb}:\nb\in[\N]^{\le k})$}, and $(x_{\mb}:\emptyset\prec \mb\preceq \nb)=(x_{\{n_1,n_2,\ldots, n_l\}})_{l=1}^k$ {\em a branch of the unrooted tree} $(x_{\nb}:\nb\in[\N]^{\le k}\setminus\{\emptyset\})$. Sequences of the form $(x_{\nb\cup\{ i\}})_{i>\max(\nb)}$ where $\nb\in [\N]^{\le k-1}$ (for a tree of height $k$), are called {\em nodes} of the tree $(x_{\nb}:\nb\in[\N]^{\le k})$.

If $(x_{\nb}:\nb\in[\N]^{\le k})$ is a tree in $X$ and $\M=\{m_1,m_2,\ldots \}\in[\N]^\omega $ we call  $(x_{\mb}:\mb\in[\M]^{\le k})$ a {\em refinement of  $(x_{\nb}:\nb\in[\N]^{\le k})$}.
 By  relabeling $\xt_{\nb}=x_{\{m_i:i\in\nb\}}$, for $\nb\in[\N]^{\le k}$,
  the family $(\xt_{\nb}:\nb\in[\N]^{\le k})$  is a tree which we also call a refinement of  $(x_{\nb}:\nb\in[\N]^{\le k})$.

If $X$ is a Banach space we call a tree $(x_{\nb}:\nb\in[\N]^{\le k})$ in $X$ {\em normalized} if $x_{\nb}\in S_X$, for all $\nb\in[\N]^{\le k}$,  and weakly convergent, or weakly null
  if all its nodes are weakly converging or  weakly null, respectively. Here $S_X$ denotes the unit sphere in $X$, while $B_X$ denotes the closed unit ball.

\subsection{Hamming graph on  $[\N]^k$}\label{SS:2.2}
  For  $k\in\N$ and $\mb=\{m_1,m_2,\ldots,m_ k\} $ and $\nb=\{n_1,n_2,\ldots, n_k\}$ in $ [\N]^{k}$  we define the {\em Hamming distance}, by
 \begin{equation}\label{E:2.2.1}
 d^{(k)}_\H(\mb,\nb)=\big|\{ i\in \{1,2,\ldots,k\}: m_i\not=n_i\}\big|
 \end{equation}
 and put $\H^\omega_k=\big([\N]^k, d^{(k)}_\H\big)$.

\subsection{Embeddings}\label{SS:2.3}
Let $(X,d_X)$ and $(Y,d_Y)$ be two metric spaces and $f:X\to Y$. One defines
$$\rho_f(t)=\inf\big\{d_Y(f(x),f(y)) : d_X(x,y)\geq t\big\},$$
and
$$\omega_f(t)={\rm sup}\{d_Y(f(x),f(y)) : d_X(x,y)\leq t\}.$$

\noindent Note that for every $x,y\in X$,
\begin{equation}\label{E:2.3.1}
\rho_f(d_X(x,y))\le d_Y(f(x),f(y))\le\omega_f(d_X(x,y)).
\end{equation}

The moduli $\rho_f$ and $\omega_f$ will  be called the \textit{compression modulus} and the \textit{expansion modulus} of the map $f$, respectively. We adopt the convention $\sup(\emptyset)=0$ and $\inf(\emptyset)=+\infty$. The map $f$ is a \textit{coarse embedding} if $\lim_{t\to \infty}\rho_f(t)=\infty$ and $\omega_f(t)<\infty$ for all $t>0$. A map $f\colon X\to Y$ is said to be a \textit{uniform embedding} if $\lim_{t\to 0}\omega_f(t)=0$ and $\rho_f(t)>0$ for all $t>0$, i.e. $f$ is an injective uniformly continuous map whose inverse is uniformly continuous.

If one is given a family  of metric spaces $(X_i)_{i\in I}$, one says that {\em $(X_i)_{i\in I}$ equi-coarsely (resp. equi-uniformly)} embeds into $Y$ if there exist non-decreasing functions $\rho, \omega\colon [0,\infty)\to[0,\infty)$ and for all $i\in I $, maps $f_i\colon X_i\to Y$ such that $\rho\le \rho_{f_i}$, $\omega_{f_i}\le \omega$, and $\lim_{t\to \infty}\rho(t)=\infty$ and $\omega(t)<\infty$ for all $t>0$ (resp. $\lim_{t\to 0}\omega(t)=0$ and $\rho(t)>0$ for all $t>0$).

We call a map  $f:X\to Y$ {\em Lipschitz continuous } if
$$\text{Lip}(f)=\sup\Big\{ \frac{d(f(x),f(y)}{d(x,y)}: x,y\in X, d(x,y)>0\Big\}<\infty,$$
and we call it a {\em bi-Lipschitz embedding}, if it is injective and, if $f$ and $f^{-1}$ are both Lipschitz continuous.

A {\em  coarse Lipschitz embedding} is a map $f:X\to Y$, for which there  are numbers $\theta\ge 0$, and  $0<c_1<c_2$, so that

\begin{equation}\label{E:2.3.2}
c_1d_X(x,y)\le d_Y(f(x),f(y))\le c_2d_X(x,y), \text{ whenever $x,y\in X$ and $d(x,y)\ge \theta$}.
\end{equation}

\section{Asymptotic properties of Banach spaces and their interplay}\label{S:3}

For two basic sequences $(x_i)$ and $(y_i)$ in some Banach spaces $X$ and $Y,$ respectively, and $C\ge 1$, we say that   $(x_i)$ and $(y_i)$ are {\em $C$-equivalent}, and we write $(x_i)\sim_C(y_i)$, if there are positive  numbers $A$ and $B$,
with $C=A\cdot B$,
so that for all $(a_j)\in c_{00}$, the vector space of all sequences  $x=(\xi_j)$ in $\R$ for which the support $\supp(x)=\{j: \xi_j\not=0\}$ is finite,  we have
$$\frac1{A}\Big\|\sum_{i=1}^\infty a_i x_i\Big\|\le \Big\|\sum_{i=1}^\infty a_i y_i\Big\|\le B\Big\|\sum_{i=1}^\infty a_i x_i\Big\|.$$
In that case we say that  $\frac1A$ is the {\em lower estimate} and $B$ the {\em upper estimate of $(y_i)$ with respect to $(x_i)$}.
Note that $(x_i)$ and $(y_i)$ are  $C$-equivalent if and only $C\ge \|T\|\cdot\|T^{-1}\|$, where the linear operator  $T:   \spa(x_i:i\in\N) \to  \spa(y_i:i\in\N)$, is defined by $T(x_i)=y_i$, $i\kin\N$.

If $(e_i)$ is a Schauder basis of a Banach space $X$, we recall that $(x_n)$ is a {\em block sequence} in $X$ with respect to the basis $(e_i)$ if $\max( \supp(x_1))<\min(\supp(x_2))\le \max(\supp(x_2))<\cdots \le  \max
(\supp(x_{n-1}))<\min(\supp(x_n))\le \cdots.$

For $k\in\N$ we denote by $\cE_k$ the set of all norms on $\R^k$, for which the unit vector basis $(e_i)_{i=1}^k$ is a normalized monotone basis. With an easily understood abuse of terminology this can also be referred to as the set of all pairs $(E,(e_j)_{j=1}^k)$, where $E$ is a $k$-dimensional Banach space and $(e_j)_{j=1}^k$ is a monotone basis of $E$.

We define a metric $\delta_k$ on $\cE_k$  as follows :
 For two spaces $E=(\R^k,\|\cdot\|_E)$ and $F=(\R^k,\|\cdot\|_F)$  we let $\delta_k(E,F)=\log\big( \|I_{E,F}\|\cdot \|I_{E,F}^{-1}\|\big)$,
$I_{E,F}:E\to F$,  is the formal identity. It also well known and easy to show that $(\cE_k, \delta_k)$ is a compact metric space.
The following definition is due to Maurey, Milman, and Tomczak-Jaegermann  \cite{MaureyMilmanTomczak1995}.

\begin{defin}{(The  $k$-th asymptotic structure of $X$ \cite{MaureyMilmanTomczak1995}.)}\label{D:3.1}

Let $X$ be a Banach space. For $k\in\N$ we define the {\em $k$-th asymptotic structure of $X$} to be the set, denoted by $\{X\}_k$, of spaces $E=(\R^k,\|\cdot\|)\in \cE_k$ for which the following is true:
 \begin{align}\label{E:3.1.1}
 \forall \vp\kgr0\, \forall X_1\kin\cof(X)\,&\exists x_1\kin S_{X_1}\, \forall X_2\kin\cof(X)\,\exists x_2\kin S_{X_2}\,\ldots  \forall X_k\kin\cof(X)\,\exists x_k\kin S_{X_k}\,\\
 &(x_j)_{j=1}^k\sim_{1+\vp} (e_j)_{j=1}^k.\notag
 \end{align}
 For $1\le p\le \infty$ and $c\ge 1$, we say  that $X$ {\em is $c$-asymptotically $\ell_p$}, if for  all  $k\kin \N$ and all spaces $E\in\{X\}_k$, with monotone normalized basis  $(e_j)_{j=1}^k$, $(e_j)_{j=1}^k$ is $c$-equivalent to the $\ell_p^k$ unit vector basis.
 We say  that $X$ {\em is asymptotically $\ell_p$}, if it is $c$-asymptotically $\ell_p$ for some $c\ge 1$. In case that $p=\infty$ we say that the space $X$ is $c$-asymptotically $c_0$, or asymptotically $c_0$.
\end{defin}

We denote by $T^*$ the Banach space originally constructed by Tsirelson in
 \cite{Tsirelson1974}. It was the first example of a Banach space that does not contain any isomorphic copies of $\ell_p$ nor $c_0$. Since it is the archetype of a reflexive asymptotic-$c_0$
 space, we explain shortly its construction (we will also use it at the end of section \ref{quasi-reflexive}). Soon after, in \cite{FigielJohnson1974}, it became clear that the
 more natural space to define is $T$, the dual of $T^*$, because the norm of this space is more conveniently described. It has since become common to refer to $T$ as Tsirelson space instead of $T^*$. Figiel and Johnson in \cite{FigielJohnson1974} gave an implicit formula that describes the norm of $T$ as follows. For $E,F\in [\mathbb N]^{<\omega}$  and $n\in \mathbb N$ we mean by $n\le E$ that $n\le \min E$ and by $E<F$ that $\max(E)< \min(F)$. We call a sequence $(E_j)_{j=1}^n$ of finite subsets of $\mathbb{N}$ {\em admissible } if $n\le E_1<E_2<\cdots<E_n$. For $x=\sum_{j=1}^\infty \lambda_j e_j\in c_{00}$ and $E\in [\N]^{<\omega}$ we write $Ex=\sum_{j\in E} \lambda_j e_j$. As it was observed in \cite{FigielJohnson1974}, if  $\|\cdot\|_T$ denotes the norm of $T$ then for every $x\in c_{00}$:
   \begin{equation}\label{E:6.1}
    \|x\|_T=\max\Big\{\|x\|_\infty, \frac12 \sup\sum_{j=1}^n \|E_jx\|_T\Big\},
   \end{equation}
where the supremum is taken over all $n\in\N$ and admissible sequences $(E_j)_{j=1}^n$. The space $T$ is the completion of $c_{00}$ with this norm and the unit vector basis is a 1-unconditional basis. Then it was proven in \cite{FigielJohnson1974} that $T$ does not contain a subspace isomorphic to $\ell_1$, which, together with the easy  observation that $T$ certainly does not contain a subspace isomorphic to $c_0$, yields by James' Theorem  \cite[Theorem 2]{James1950} that $T$ must be reflexive.

The following property of $T^*$ (see \cite{Tsirelson1974}*{Lemma 4}) is essential:
\begin{equation}\label{E:6.2}
\Big\| \sum_{j=1}^n x_j\Big\|_{T^*}\le 2\max_{1\leq j\leq n}\|x_j\|_{T^*} \text{ if $(x_j)_{j=1}^n$ is a block sequence, with $n\le \supp(x_1)$.}
\end{equation}
The fact that $T^*$ is $2$-asymptotic-$c_0$ is an easy consequence of the above estimate. This well known fact is hard to track down in the literature, and follows from the fact that every weakly null tree admits a refinement for which all branches are  arbitrary small perturbations of blocks.

\begin{remarks}\label{R:3.2}
Let us recall some easy facts about the asymptotic structure of a Banach space which can be found in  \cite{KnaustOdellSchlumprecht1999}, \cite{MaureyMilmanTomczak1995} or \cite{OdellSchlumprecht2002}.
\begin{enumerate}
 \item[a)]
 Let  $E=(\R^k,\|\cdot\|)$, with $\|\cdot\| $ being a norm on $\R^k$, for which $(e_j)$ is  a normalized basis (but not necessarily monotone). If $(e_j)$ satisfies  \eqref{E:3.1.1} for some infinite dimensional Banach space $X$, then $(e_j)_{j=1}^k$ is automatically a  monotone basis of $E$ (by using the ideas of Mazur's proof that normalized weakly null sequences have  basic subsequences with a basis constant which is arbitrarily close to $1$). Therefore the above introduced  definition of asymptotic structure coincides with the original one given in \cite{MaureyMilmanTomczak1995}.
\item[b)]
 For any infinite dimensional Banach space $X$ and $k\in\N$, $\{X\}_k$ is a closed and thus  compact subset of $\cE_k$ with respect to the above introduced  metric $\delta_k$ on $\cE_k$.

\item[c)]
 For a $k$-dimensional space $E$ with a monotone normalized basis $(e_j)_{j=1}^k$ to be in the $k$-asymptotic structure can be equivalently described by having a winning strategy in the following game between to players: We fix $\vp>0$. Player I (the ``space chooser'') choses a space $X_1\in \cof(X)$, then player II (the ``vector chooser'') chooses a vector $x_1\kin S_{X_1}$, then player I and player II repeat  these moves to obtain spaces $X_1$, $X_2,\ldots, X_k$ in $\cof(X)$ and vectors $x_1,x_2, \ldots,x_k$, with $x_i\in S_{X_i}$. The space $E$ being in $\{X\}_k$  means that for every $\vp>0$ player II has a winning strategy, if his or her goal is to obtain a sequence $(x_j)_{j=1}^k$ which is $(1+\vp)$-equivalent to $ (e_j)_{j=1}^k$.

   For $E\kin\cE_k$ with monotone basis $(e_j)_{j=1}^k$ and $\vp\kgr0$ a winning strategy for the vector chooser can then be defined to be a {\em tree family}
   $$\cF=\big(x(X_1, X_2,\ldots, X_l): 1\le l\le k,\, X_1,X_2,\ldots, X_l\in \cof(X)\big)\subset S_X$$
    with the property that for any choice of  $X_1,X_2,\ldots, X_k\in \cof(X)$,  and any $l\le k$, $x(X_1,X_2,\ldots, X_l)\in S_{X_l}$ and
    so that the sequence $\big(x(X_1,X_2,\ldots, X_l)\big)_{l=1}^k$ is $(1+\vp)$-equivalent to $(e_j)_{j=1}^k$.

Since the game has finitely many steps it is {\em determined,} meaning that either the vector chooser or the space chooser has a winning strategy.
     Using the language of the game and its determinacy it is then easy to see that the set  $\{X\}_k$  is  the smallest compact subset for which the space chooser has a winning strategy if  for a given $\vp>0$
     his or her goal is that the  resulting sequence $(x_j)_{j=1}^k$ is at distance at most $\vp$ to $\{X\}_k$ (with respect to the metric $\delta_k$). In particular a Banach space is asymptotically $\ell_p$, $1\le p<\infty$,
     or asymptotically $c_0$,  if and only if there is a $c>0$ so that for each $k\in\N$ the space chooser has a winning strategy to  get a sequence $(x_j)_{j=1}^k$ which is $c$-equivalent to the unit vector basis in
     $\ell^k_p$, or $\ell^k_\infty$, respectively.

\item[d)]
 Assume that $X$ is a space with a separable dual. Then we can replace in the definition of  $\{X\}_k$ the  set $\cof(X)$ by a countable subset of $\cof(X)$, namely by the
 set
 $$ \big\{ F^\perp: F\subset \{{x^*_j:j\in\N}\} \text{ finite}\big\}, \text{ where $\{x^*_j:j\in\N\}\subset S_{X^*}$ is dense.}$$
In that case normalized  weakly null trees in $X$ indexed by $[\N]^{\le k}$ can be used to describe the $k$-th asymptotic structure: If $X^*$ is separable and $k\kin\N$, a space $E\in\cE_k$ with monotone basis $(e_j)_{j=1}^k$ is in $\{X\}_k$ if and only if for every $\vp>0$
   there is an unrooted  weakly null tree $\cT=\big(x_{\nb}:\nb\in[\N]^{\le k}\setminus\{\emptyset\}\big)$ in $S_X$ for which all branches are $(1+\vp)$-equivalent to $(e_j)_{j=1}^k$.

   It follows therefore from (c) and Ramsey's Theorem that $X$ is asymptotically $\ell_p$, for $1\le p<\infty$, or asymptotically $c_0$, if  there is a $C\ge 1$ so that for every $k\kin\N$ every unrooted  normalized weakly null tree of height $k$  has
   a refinement (as introduced in subsection \ref{SS:2.1}) all of whose branches are $C$-equivalent to  the $\ell_p^k$-unit vector basis.
 \end{enumerate}
\end{remarks}

The following observation will reduce the proof of the main results to the separable case.
\begin{prop}\label{P:3.3}
Let $X$ be a reflexive Banach space. Then there exists a separable subspace $Y$ of $X$ so that for all $k\in\N$ we have $\{X\}_k = \{Y\}_k$.
\end{prop}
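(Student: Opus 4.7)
For the easy direction, note that $\{Y\}_k\subseteq \{X\}_k$ holds for any closed subspace $Y\subseteq X$: a winning strategy for the vector chooser in the $Y$-game lifts to the $X$-game by responding to each space-chooser move $X_j\in\cof(X)$ through the $Y$-game response to $Y\cap X_j\in\cof(Y)$ (finite-codimensional since $Y/(Y\cap X_j)$ embeds into $X/X_j$), whose value lies in $S_{Y\cap X_j}\subseteq S_{X_j}$. Hence the task is to produce a separable $Y\subseteq X$ with $\{X\}_k\subseteq \{Y\}_k$ for every $k$.

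The plan is to exploit that each $\{X\}_k$ is a compact subset of $(\cE_k,\delta_k)$ (Remarks \ref{R:3.2}(b)), hence separable. Fix for each $k\in\N$ a countable dense subset $\{E^{(k,l)}:l\in\N\}$ of $\{X\}_k$, and for each triple $(k,l,m)\in\N^3$ a winning strategy $\sigma^{(k,l,m)}$ for the vector chooser witnessing $E^{(k,l)}\in\{X\}_k$ up to $1+1/m$. Since $\{Y\}_k$ is closed in $\cE_k$, it suffices to construct $Y$ so that $E^{(k,l)}\in\{Y\}_k$ for every $k,l$: closedness and density then force $\{X\}_k\subseteq\{Y\}_k$. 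Build $Y$ as the closed union of a nested sequence $Y^{(0)}\subseteq Y^{(1)}\subseteq\cdots$ of separable subspaces, defined inductively. At stage $n$, $Y^{(n)}$ is separable and, as a closed subspace of the reflexive $X$, reflexive, so $(Y^{(n)})^*$ is separable; pick a countable dense subset $\{h^{(n)}_i:i\in\N\}\subseteq S_{(Y^{(n)})^*}$, Hahn--Banach-extend each to $f^{(n)}_i\in S_{X^*}$, and form the countable family $\cF^{(n)}\subseteq\cof(X)$ of finite intersections of the kernels $\ker f^{(n)}_i$. Set $Y^{(n+1)}$ equal to $Y^{(n)}$ enlarged by all strategy outputs $\sigma^{(k,l,m)}(X_1,\ldots,X_j)$ with $k,l,m,j\le n+1$ and $X_1,\ldots,X_j\in\bigcup_{i\le n}\cF^{(i)}$. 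Only countably many vectors are adjoined per stage, so $Y:=\overline{\bigcup_n Y^{(n)}}$ is separable.

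The verification that $E^{(k,l)}\in\{Y\}_k$ proceeds by showing that, given $\varepsilon>0$ and any space-chooser play $Y_1,\ldots,Y_k\in\cof(Y)$ in the $Y$-game, one may choose $m$ with $1/m$ much smaller than $\varepsilon/k$ and approximate each $Y_j=\bigcap_i\ker g^j_i$ (with $g^j_i\in Y^*$) by some $X_j\in\cF^{(n)}$ for $n$ large, in such a way that the strategy vector $x_j=\sigma^{(k,l,m)}(X_1,\ldots,X_j)$ lies in $Y$ by construction and almost annihilates each $g^j_i$; a small norm perturbation---feasible by reflexivity of $X$ and finiteness of codimension---replaces $x_j$ by $y_j\in S_{Y_j}$ close to $x_j$. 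The main technical obstacle is the coherent bookkeeping of approximation parameters across the $k$ successive rounds and across the construction stages $n$: one must arrange that the Hahn--Banach extensions fixed at stage $n$ remain compatible with approximations of arbitrary functionals on the eventually enlarged $Y$, which forces a careful fusion of the inductive steps and a careful accounting of the slack $1/m$. Once this is carried out, the perturbed sequence $(y_j)_{j=1}^k$ is $(1+\varepsilon)$-equivalent to $(e_j)_{j=1}^k$, so $E^{(k,l)}\in\{Y\}_k$, and closedness of $\{Y\}_k$ combined with density gives $\{X\}_k\subseteq\{Y\}_k$, completing the proof with the easy inclusion.
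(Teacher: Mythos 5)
Your easy inclusion $\{Y\}_k\subseteq\{X\}_k$ and the outer skeleton (separability of the compact sets $\{X\}_k$, a countable dense family $E^{(k,l)}$, closedness of $\{Y\}_k$ plus density to get $\{X\}_k\subseteq\{Y\}_k$) are fine and agree with the paper. The gap is the central step, which you yourself flag as ``the main technical obstacle'' and then do not carry out --- and as you have set it up it actually fails. You need: given a play $Y_1,\dots,Y_k\in\cof(Y)$, a member $X_j$ of the countable family fixed during the construction whose strategy output $x_j=\sigma^{(k,l,m)}(X_1,\dots,X_j)$ (which lies in $Y$ by construction) is close to $Y_j$, i.e.\ nearly annihilates the finitely many functionals $g^j_i\in Y^*$ defining $Y_j$. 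But the functionals feeding $\cF^{(n)}$ are Hahn--Banach extensions of a set dense only in $S_{(Y^{(n)})^*}$: the approximation $\|(g^j_i-f^{(n)}_{i'})|_{Y^{(n)}}\|<\epsilon$ says nothing off $Y^{(n)}$, whereas the output $x_j$ attached to tuples from $\bigcup_{i\le n}\cF^{(i)}$ is adjoined only at stage $n+1$, so in general $x_j\in Y^{(n+1)}\setminus Y^{(n)}$, where $g^j_i-f^{(n)}_{i'}$ can have norm close to $2$. Hence $f^{(n)}_{i'}(x_j)=0$ gives no bound on $|g^j_i(x_j)|$, passing to later stages merely reproduces the same mismatch one stage later, and the restrictions to $Y$ of the chosen extensions are not norm dense in $Y^*$; so the ``careful fusion of parameters'' you invoke does not exist in the form described, and the perturbation of $x_j$ into $S_{Y_j}$ is unjustified.

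What is missing is exactly the point where reflexivity must enter through weak compactness, and it is how the paper breaks this circularity: by Eberlein--\v{S}mulyan one first converts, for each $E$ in a countable dense subset of $\{X\}_k$ and each tolerance $1/m$, the winning strategy into a countably branching weakly null tree in $S_X$ all of whose branches are $(1+1/m)$-equivalent to the basis of $E$ (Lemma \ref{L:8.2}); $Y$ is then the closed linear span of the countably many vectors of these trees. Since the tree vectors lie in $Y$, the trees are weakly null in $Y$ as well, and Lemma \ref{L:8.1} --- whose proof needs only that normalized weakly null sequences almost enter any finite-codimensional subspace --- returns $E\in\{Y\}_k$; compactness of the asymptotic-structure sets and density finish the argument. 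In other words, the role of ``vectors that almost enter every $Y_j\in\cof(Y)$'' must be played by weak nullness of trees inside $Y$, not by exact membership in kernels of pre-chosen Hahn--Banach extensions; without such a step your argument does not go through.
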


We will need the following two lemmas first.

\begin{lem}\label{L:8.1}
Let $X$ be an infinite dimensional Banach space and let $E$ be a $k$-dimensional Banach space with a normalized monotone Schauder basis $(e_i)_{i=1}^k$. If for every $\vp>0$ there exists a weakly null tree $\{x_{\nb}:\nb\in[\N]^{\leq k}\setminus \{\emptyset\}\}\subset S_X$ so that for every $\mb=\{m_1,\ldots,m_k\}\in [\N]^k$ the sequence $(x_{\{m_1,\ldots,m_i\}})_{i=1}^k$ is $(1+\vp)$-equivalent to $(e_i)_{i=1}^k$. Then $(e_i)_{i=1}^k$ is in $\{X\}_k$.
\end{lem}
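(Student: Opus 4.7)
The plan is to show that the hypothesis produces, for every $\varepsilon>0$, a winning strategy for the vector chooser in the game of Remark \ref{R:3.2}(c). Fix $\varepsilon>0$ and choose auxiliary parameters $\varepsilon'>0$ and $\delta>0$, small enough that a $\delta$-perturbation of a normalized sequence which is $(1+\varepsilon')$-equivalent to the monotone basis $(e_i)_{i=1}^k$ remains $(1+\varepsilon)$-equivalent to $(e_i)_{i=1}^k$. By hypothesis there is a weakly null tree $(x_{\nb}:\nb\in[\N]^{\le k}\setminus\{\emptyset\})\subset S_X$ whose branches are all $(1+\varepsilon')$-equivalent to $(e_i)_{i=1}^k$. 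I will use this tree as a ``template'' and define the strategy by choosing the indices $m_1<m_2<\cdots<m_k$ adaptively, in response to the space chooser's moves.

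The key technical input is the following standard stabilization fact: if $(y_n)\subset S_X$ is weakly null and $X_0\in\cof(X)$, then for every $\eta>0$ there exists $n$ and $\tilde y\in S_{X_0}$ with $\|\tilde y-y_n\|<\eta$. This holds because $X_0$ is the intersection of the kernels of finitely many functionals $f_1,\ldots,f_r$, so $f_i(y_n)\to0$; writing $X=X_0\oplus Z$ for a finite-dimensional complement $Z$ and using that weak and norm convergence coincide on $Z$ yields a vector in $X_0$ close to $y_n$, and after renormalization one lands on $S_{X_0}$ with a slightly worse but still small error. Take $\eta=\delta$.

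Now define the strategy recursively. At round $j$, the space chooser plays $X_j\in\cof(X)$. Having already selected $m_1<\cdots<m_{j-1}$, the node $(x_{\{m_1,\ldots,m_{j-1},i\}})_{i>m_{j-1}}$ is a normalized weakly null sequence, so by the stabilization fact we can pick $m_j>m_{j-1}$ and $\tilde x_j\in S_{X_j}$ with $\|\tilde x_j-x_{\{m_1,\ldots,m_j\}}\|<\delta$. The vector chooser responds with $\tilde x_j$. After round $k$, the branch $(x_{\{m_1,\ldots,m_i\}})_{i=1}^k$ is $(1+\varepsilon')$-equivalent to $(e_i)_{i=1}^k$ by the tree hypothesis, and each chosen $\tilde x_i$ differs from it by less than $\delta$ in norm. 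By the small-perturbation choice of $\varepsilon'$ and $\delta$, the sequence $(\tilde x_i)_{i=1}^k$ is $(1+\varepsilon)$-equivalent to $(e_i)_{i=1}^k$. Thus the vector chooser has a winning strategy for every $\varepsilon>0$, which by Remark \ref{R:3.2}(c) means $E\in\{X\}_k$.

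The main obstacle is purely technical: quantifying the two-step perturbation (from weakly null sequence to $S_{X_j}$, and from the branch to the sequence $(\tilde x_i)$) so that the accumulated error is controlled in terms of $\varepsilon$ and the monotone basis constant of $(e_i)$. Both steps are classical — the first is the Mazur-type argument that weak nullity lets one approximately enter any cofinite-codimensional subspace, and the second is the standard basic-sequence stability lemma — so the argument goes through once $\delta$ is taken sufficiently small (for instance $\delta< \varepsilon/(3k)$ after rescaling by the basis constant suffices, but the exact value is immaterial).
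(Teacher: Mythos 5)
Your proposal is correct and follows essentially the same route as the paper's proof: use the game characterization of $\{X\}_k$, exploit the fact that a normalized weakly null sequence comes arbitrarily close to the unit sphere of any finite-codimensional subspace to pick, in response to each $X_j$, an index $m_j>m_{j-1}$ and a vector $\tilde x_j\in S_{X_j}$ within $\delta$ of the tree node, and finish with a standard small-perturbation argument for basic sequences. The only difference is that you spell out the stabilization fact and the perturbation bookkeeping, which the paper leaves as "for $\delta$ sufficiently small."
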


\begin{proof}
Recall that if $Y\in\mathrm{cof}(X)$ and $(z_i)_{i=1}^\infty$ is a normalized weakly null sequence then $\lim_i\mathrm{dist}(z_i,S_Y) \keq 0$. Fixing $\vp\kgr0$ and $k\in\mathbb{N}$ we will show that the vector player can choose a sequence that is $(1+\varepsilon)$-equivalent to $(e_i)_{i=1}^k$. Take a weakly null tree $(x_{\bar m}:\bar m\in[\N]^{\leq k})\subset S_X$  so that for all $\bar m = \{m_1,\ldots,m_k\}$ the sequence is $(x_{\{m_1,\ldots,m_i\}})_{i=1}^k$ is $(1+\delta)$-equivalent to $(e_i)_{i=1}^k$, where we will choose $\delta>0$ later. For each turn $1\leq i\leq k$ of the game when the subspace player chooses $Y_i\in\mathrm{cof}(X)$ the vector player picks $m_i>m_{i-1}$ (where $m_0 = 0$) so that there is $x_i\in S_{Y_i}$ with $\|x_i-x_{\{m_1,\ldots,m_i\}}\| \leq \delta$. For $\delta$ sufficiently small, this strategy for choosing $x_i$ in $S_{Y_i}$ insures that the sequence $(x_i)_{i=1}^k$ is $(1+\vp)$-equivalent to $(e_i)_{i=1}^k$.
\end{proof}

\begin{lem}
\label{L:8.2}
Let $X$ be a reflexive Banach space, $k\in\N$, $(e_i)_{i=1}^k\in\{X\}_k$, and let $\vp>0$. Then there exists a countably branching weakly null tree $\{x_{\nb}:\nb\in[\N]^{\leq k}\setminus \{\emptyset\}\}$ in $S_X$, all of whose branches are $(1+\vp)$-equivalent to $(e_i)_{i=1}^k$.
\end{lem}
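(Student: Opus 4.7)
The plan is to use the game-theoretic description of $\{X\}_k$ from Remark 3.2(c) and feed the vector chooser's winning strategy a countable chain of finite codimensional subspaces tailored to force weak nullness. Fix $\delta\in(0,\vp)$. Since $(e_j)_{j=1}^k\in\{X\}_k$, the vector chooser has a winning strategy
\[
\cF=\big(x(Y_1,\ldots,Y_l):1\leq l\leq k,\ Y_1,\ldots,Y_l\in\cof(X)\big)
\]
with $x(Y_1,\ldots,Y_l)\in S_{Y_l}$, such that each resulting branch is $(1+\delta)$-equivalent to $(e_j)_{j=1}^k$.

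I first treat the case where $X$ is separable (and reflexive), so that $X^*$ is separable. Choose a norm-dense sequence $(x^*_n)\subset S_{X^*}$ and set
\[
Y_n=\bigcap_{j=1}^n\ker(x^*_j)\in\cof(X),
\]
a decreasing chain with $\bigcap_{n\in\N}Y_n=\{0\}$. For $\nb=\{n_1<n_2<\cdots<n_l\}\in[\N]^{\leq k}\setminus\{\emptyset\}$, define
\[
x_{\nb}=x(Y_{n_1},Y_{n_2},\ldots,Y_{n_l})\in S_{Y_{n_l}}.
\]
Each branch $(x_{\{n_1,\ldots,n_l\}})_{l=1}^k$ is a legal play of the winning strategy, hence $(1+\delta)$-, and \emph{a fortiori} $(1+\vp)$-, equivalent to $(e_j)_{j=1}^k$. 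For weak nullness at a node $\nb=\{n_1,\ldots,n_{l-1}\}$, each child satisfies $x_{\nb\cup\{m\}}\in S_{Y_m}$; by Eberlein--Smulian (reflexivity of $X$), every subsequence has a weakly convergent further subsequence with weak limit $y$. Since the $Y_N$ are norm-closed, hence weakly closed, and contain all but finitely many terms of that subsequence, $y\in Y_N$ for every $N$, so $y\in\bigcap_N Y_N=\{0\}$. Thus $0$ is the unique weak cluster point of the bounded sequence $(x_{\nb\cup\{m\}})_m$ in the reflexive space $X$, which forces the whole sequence to be weakly null.

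For general reflexive $X$ (where $X^*$ may fail to be separable), I would run the above construction inside a separable reflexive subspace $Z\subseteq X$ built on the fly, since Proposition 3.3 is not yet available. Starting with $Z_0=\{0\}$ and an empty family of functionals, I alternately enlarge the countable family $\{x^*_n\}\subset S_{X^*}$ to norm the current $Z_s$, and enlarge $Z_s$ by the finitely many strategy responses $x(Y_{n_1},\ldots,Y_{n_l})$ with $n_l$ among the indices currently available. After $\omega$-many stages one obtains a separable subspace $Z=\overline{\bigcup_s Z_s}$ and a sequence $(x^*_n)$ whose restrictions are norm-dense in $Z^*$, so that $\bigcap_n Y_n\cap Z=\{0\}$. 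The separable argument above then goes through verbatim inside $Z$, producing a tree in $S_Z\subseteq S_X$; since $Z$ is norm-closed, weak nullness in $Z$ upgrades to weak nullness in $X$.

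The main obstacle is precisely this simultaneous choice: the functionals $x^*_n$ must norm the already-chosen tree vertices in order to witness weak nullness, while the new tree vertices are produced by the strategy applied to finite codimensional subspaces cut out by the already-chosen functionals. Once this back-and-forth bookkeeping is set up correctly, the branch estimate is tautological from the definition of a winning strategy, and the weak nullness of nodes reduces to the by-now-standard ``$\bigcap Y_n=\{0\}$ forces weakly null'' mechanism combined with Eberlein--Smulian.
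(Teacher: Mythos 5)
Your argument is correct, but it follows a genuinely different route from the paper's. The paper works directly in the (possibly nonseparable) reflexive space $X$: it observes that $0$ lies in the weak closure of the set of strategy responses $\{x(Y_{m_1},\ldots,Y_{m_i},Y):Y\in\cof(X)\}$ (feed the strategy the kernel of any finite set of functionals), notes this set is relatively weakly compact by reflexivity, and applies Eberlein--\v{S}mulyan node by node to extract, at each node, a sequence of subspaces whose responses are weakly null; no separability reduction is needed. You instead fix, in the separable case, a single decreasing chain $Y_n=\bigcap_{j\le n}\ker(x^*_j)$ with $\bigcap_n Y_n=\{0\}$ and feed the whole chain to the strategy, so that weak nullness of every node follows at once from ``any weak cluster point lies in every $Y_n$'' plus reflexivity --- a clean, uniform mechanism that avoids node-by-node extraction, and your branch estimate is indeed immediate since each branch is a legal play of the winning strategy. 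The price is that for nonseparable $X$ you must manufacture a separable subspace $Z$ and a countable total family of functionals by a back-and-forth; this is standard and your sketch is workable (each vertex is defined at a finite stage from already-fixed functionals, and later functionals norm a countable dense set of the span of the vertices, so the previously built $Y_n$ never change), though note that you only need the restrictions of $(x^*_n)$ to be total over $Z$, i.e.\ $Z\cap\bigcap_n Y_n=\{0\}$; norm-density of the restrictions in $Z^*$ is an overstatement of what the norming bookkeeping gives and of what is required. In short: your proof trades the paper's direct Eberlein--\v{S}mulyan extraction (valid verbatim in full generality) for a simpler chain argument plus a separable-reduction step that the paper's proof does not need.
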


\begin{proof}
We recall that the Eberlein-\v{S}mulyan theorem insures that if $W$ is a relatively weakly compact set in a Banach space and $x_0\in\overline{W}^w$ then there exists a sequence $(x_i)_{i=1}^\infty$ in $W$ with $x_i\stackrel{w}{\rightarrow}x_0$. Let $\vp>0$ and let  $\big( x(Y_1,Y_2,\ldots, Y_i): i=1,2,\ldots, k, Y_1,Y_2,\ldots, Y_i\in \cof(X)\big)$ be a normalized tree with $x(Y_1,Y_2,\ldots, Y_i)\in S_{Y_i}$ and whose branches approximate $(e_j)_{j=1}^k$ up to $(1+\vp)$-equivalence (see Remarks \ref{R:3.2} (c)). By reflexivity the set $\{x(Y): Y\in\mathrm{cof}(X)\}$ (the first level of  the tree) is relatively weakly compact. Also, $0\in\overline{\{x(Y): Y\in\mathrm{cof}(X)\}}^w$. Indeed, if $f_1,\ldots,f_d$ are in $X^*$ then $Y = \bigcap_{j=1}^d \ker(f_j)$ is in $\mathrm{cof}(X)$ and hence $f_j(x(Y)) = 0$ for $1\leq j\leq d$. We may thus pick a sequence $(Y_l)_l$ in $\mathrm{cof}(X)$ with $x(Y_l)\stackrel{w}{\rightarrow} 0$.

Assume that for some $i\in\N$ we have assigned for each $\{m_1,\ldots,m_i\}\in[\N]^i$ a vector $x_{\{m_1,\ldots,m_i\}}$ of the form $x(Y_{m_1},Y_{m_2},\ldots,Y_{m_i})$.  As before, we may pick a sequence $(Y^{(i+1)}_l)_l$ so that $x(Y_{m_1},Y_{m_2},\ldots,Y_{m_i},Y^{(i+1)}_l)\stackrel{w}{\rightarrow}0$. For $j>m_i$ we define $$x_{\{m_1,\ldots,m_i,j\}} = x(Y_{m_1},Y_{m_2},\ldots,Y_{m_i},Y^{(i+1)}_l),$$
for some large enough $l$. Thus, every $(x_{\{m_1,\ldots,m_i\}})_{i=1}^k$ is of the form $\big(x(Y_1,\ldots,Y_i)\big)_{i=1}^k$, and thus $(1+\vp)$-equivalent to $(e_j)_{j=1}^k$.
\end{proof}

\begin{proof}[Proof of Proposition \ref{P:3.3}]
Since for every $k\in\N$ the $k$-asymptotic structure $\{X\}_k$ is separable (with respect to the metric introduced in Section \ref{S:3} (b)), we can find a countable set $\{ (e^{(l)}_j)_{j=1}^k:l\in\N\}\subset \{X\}_k$ which is dense in $\{X\}_k$ and, using Lemma \ref{L:8.2}, a countable collection of weakly null trees $\big\{( x^{(r)}_{\nb}:\nb\in[\N]^{\le k}): r\in\N\big\}$ in $S_X$ so that for  each $\vp>0$
and each  $l\in\N$ there is a $r\in\N$,  so that for all $\nb\in[\N]^k$,  the sequence $\big( x^{(r)}_{\mb}: \mb\preceq\nb) $ is $(1+\vp)$-equivalent to $(e^{(l)}_j)_{j=1}^k$. We define  $Y_k$ to be the closed linear span of  $\{x^{(r)}_{\nb}:r\in\N, \nb\in[\N]^{\le k}\}$. Since $\{Y_k\}_k$ and $\{X\}_k$ are  compact (see (b) in Section \ref{S:3}) it follows that $\{Y_k\}_k=\{X\}_k$. Finally we conclude our proof by setting $Y$ to be the closed linear span of $\cup_{k\in \N}Y_k$ and deduce our claim.
\end{proof}

We now turn to ``sequential asymptotic properties'' of Banach spaces. These are properties which involve  sequences  and their subsequences, as opposed to trees and their refinements.

Let $X$ be a Banach space and $k\in\N$. A family $\big(x^{(i)}_j:i=1,2,\ldots, k, j\in \N\big)\subset X,$ is called an {\em array of height $k$ in $X$}. An {\em array of infinite height} in $X$ is a family $\big(x^{(i)}_j:i, j\in \N\big)\subset X$.

For (finite or infinite) arrays $\big(x^{(i)}_j:i=1,2,\ldots, k, j\in \N\big)$, or $\big(x^{(i)}_j:i, j\in \N\big)$ respectively, we call the sequence  $(x^{(i)}_j)_{j\in\N}$  {\em the $i$-th row of the array}. We call an array weakly null  if all rows are weakly null. A {\em subarray} of a finite array $\big(x^{(i)}_j:i=1,2,\ldots, k, j\in \N\big)\subset X,$ or an infinite array $\big(x^{(i)}_j:i\in \N, j\in \N\big)\subset X,$ is an array of the form $\big(x^{(i)}_{j_s}:i=1,2,\ldots, k, s\in \N\big)$ or  $\big(x^{(i)}_{j_s}:i\in \N, s\in \N\big)$, respectively,
where $(j_s)\subset \N$ is a subsequence.  Thus, for a subarray we
are taking  the same subsequence in each row.

 The following  notion was introduced by Halbeisen and Odell \cite{HalbeisenOdell2004}.
\begin{defin}\label{D:3.4}\cite{HalbeisenOdell2004} A basic sequence $(e_i)$  is called an {\em asymptotic model} of a Banach space $X$, if there exist
 an    infinite  array  $\big(x^{(i)}_j:i ,j\kin \N\big)\subset S_X$ and a   null-sequence $(\vp_n)\subset(0,1)$, so that for  all $n$, all $(a_i)_{i=1}^n \subset[-1,1]$ and $n\le k_1<k_2<\ldots  <k_n$, it follows that
 \begin{equation*}
 \Bigg|\Big\|\sum_{i=1}^n a_i x^{(i)}_{k_i}\Big\|  - \Big\| \sum_{i=1}^n a_i e_i \Big\|\Bigg|<\vp_n.
 \end{equation*}
  \end{defin}
  In \cite{HalbeisenOdell2004} the following was shown.
\begin{prop}\label{P:3.5}\cite{HalbeisenOdell2004}*{Proposition 4.1 and Remark 4.7.5}
Assume that  $\big(x^{(i)}_j:i, j\in \N\big)\subset S_X$ is an infinite array, all of whose rows  are normalized and weakly null. Then there is  a subarray of $\big(x^{(i)}_j:i, j\in \N\big)$ which has a $1$-suppression unconditional  asymptotic model $(e_i)$.
\end{prop}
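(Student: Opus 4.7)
The plan is to extract the subarray in two stages: first to obtain an asymptotic model, then to upgrade it to 1-suppression unconditional using weak nullness of the rows. This mirrors classical spreading-model constructions, adapted here to the array setting.

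\textbf{Stage 1 (asymptotic model).} Enumerate a countable dense family $\{(a^{(s)}_1,\ldots,a^{(s)}_{n_s})\}_{s\in\N}$ of finite rational coefficient tuples across all lengths. For each $s$, the bounded function $(k_1,\ldots,k_{n_s})\in [\N]^{n_s}\mapsto \|\sum_i a^{(s)}_i x^{(i)}_{k_i}\|$ admits, by Ramsey's theorem, an infinite subset of $\N$ on which its values cluster within $2^{-s}$. Diagonalizing over $s$ produces $\M_0\in[\N]^\omega$ such that for every rational tuple the limit $\|\sum a_i e_i\|:=\lim_{k_1<\cdots<k_n\to\infty,\,k_i\in\M_0}\|\sum a_i x^{(i)}_{k_i}\|$ exists. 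Since each row is in $S_X$, this map is $\|\cdot\|_1$-Lipschitz in $(a_i)$ and extends by continuity to all real tuples, producing a monotone basic sequence $(e_i)$ satisfying Definition~\ref{D:3.4} with an appropriate null sequence $(\vp_n)$.

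\textbf{Stage 2 (1-suppression).} Weak nullness of the rows forces suppressed sums to be dominated by full ones. By iterating over $F^c$, it suffices to prove $\|\sum_{i\neq i_0}a_i e_i\|\le\|\sum_{i=1}^n a_i e_i\|$ for each $i_0\in\{1,\ldots,n\}$. Since $a_{i_0}x^{(i_0)}_{k_{i_0}}\rightharpoonup 0$ as $k_{i_0}\to\infty$, weak lower semicontinuity of the norm yields $\|\sum_{i\neq i_0}a_i x^{(i)}_{k_i}\|\le\liminf_{k_{i_0}\to\infty}\|\sum_{i=1}^n a_i x^{(i)}_{k_i}\|$ for fixed $(k_i)_{i\neq i_0}$. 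Equivalently, choosing a norming functional $x^*\in S_{X^*}$ of $\sum_{i\neq i_0}a_i x^{(i)}_{k_i}$ after the indices $(k_i)_{i\neq i_0}$ are fixed gives
\begin{equation*}
\Big\|\sum_{i\neq i_0}a_i x^{(i)}_{k_i}\Big\| \le \Big\|\sum_{i=1}^n a_i x^{(i)}_{k_i}\Big\| + |a_{i_0}|\cdot|x^*(x^{(i_0)}_{k_{i_0}})|,
\end{equation*}
with weak nullness of the $i_0$-th row providing $k_{i_0}$ for which $|x^*(x^{(i_0)}_{k_{i_0}})|$ is arbitrarily small. A further inductive extraction $\M\subseteq\M_0$, where $j_s$ is chosen to satisfy finitely many such weak-nullness inequalities (drawn from a countable dense family of tuples and a weak-$*$ dense set of functionals in $B_{X^*}$), produces the subarray whose asymptotic model is 1-suppression unconditional.

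\textbf{Main obstacle.} The technical subtlety is the ordering constraint $k_{i_0-1}<k_{i_0}<k_{i_0+1}$ when $i_0<n$: the slot available for $k_{i_0}$ is bounded above by the pre-chosen $k_{i_0+1}$, whereas the weak-nullness threshold $J(x^*)$ depends on $x^*$, which itself depends on $k_{i_0+1}$. I would handle this by spreading $k_{i_0+1},\ldots,k_n$ widely apart so that the slot for $k_{i_0}$ grows without bound as the outer indices grow, and by applying weak-$*$ compactness of $B_{X^*}$ (restricted to the separable span of the array) to extract a subnet of norming functionals converging to some $x^*_\infty$; weak nullness of row $i_0$ tested against $x^*_\infty$ then furnishes the required $k_{i_0}$ within the slot. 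Simultaneous control over the dense family of tuples through the diagonal selection is the heart of the construction.
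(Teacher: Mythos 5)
You should first note that the paper offers no proof of this proposition: it is quoted verbatim from Halbeisen--Odell \cite{HalbeisenOdell2004}, so there is no in-paper argument to compare with; your proposal has to stand on its own. Its two-stage scheme (Ramsey plus diagonal extraction to stabilize the norms along configurations, then an upgrade to $1$-suppression via weak nullness of the rows) is the standard route, and you have correctly isolated the real crux: the ordering constraint $k_{i_0-1}<k_{i_0}<k_{i_0+1}$ leaves only finitely many admissible slots for $k_{i_0}$ once the later indices are fixed, so weak nullness cannot be applied naively. Your repair can be made to work, but only if the transfer step is made explicit, since $x^*_\infty$ by itself norms nothing: fix configurations $k_{i_0+1}(m)<\cdots<k_n(m)$ with $k_{i_0+1}(m)>m$, let $x^*_m$ norm $y_m=\sum_{i<i_0}a_ix^{(i)}_{k_i}+\sum_{i>i_0}a_ix^{(i)}_{k_i(m)}$, extract $x^*_{m_t}\to x^*_\infty$ weak$^*$, choose $k_{i_0}$ with $|x^*_\infty(x^{(i_0)}_{k_{i_0}})|<\delta$, and then take $t$ so large that both $m_t>k_{i_0}$ (so $k_{i_0}$ fits the slot) and $|x^*_{m_t}(x^{(i_0)}_{k_{i_0}})|<2\delta$, the latter by weak$^*$ convergence evaluated at the single fixed vector $x^{(i_0)}_{k_{i_0}}$. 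Then $\|y_{m_t}\|\le\|y_{m_t}+a_{i_0}x^{(i_0)}_{k_{i_0}}\|+2\delta$, both sides sit over admissible configurations, and with your Stage 1 limit (error tending to $0$ as the configuration tends to infinity, for each fixed tuple) this yields the exact inequality $\|\sum_{i\ne i_0}a_ie_i\|\le\|\sum_{i=1}^n a_ie_i\|$.

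Three smaller points. First, no ``further inductive extraction'' is needed in Stage 2, and the appeal to a weak$^*$ dense set of functionals is not the right mechanism: the suppression inequality is a statement about the limit norm of the subarray already produced in Stage 1, proved by exhibiting good configurations, not by thinning the array further. Second, there is a simpler classical resolution of the slot problem, the one used for spreading models in \cite{BeauzamyLapreste1984} and which adapts verbatim to arrays: by Mazur's theorem pick a finite convex combination $z=\sum_{j\in J}\lambda_j x^{(i_0)}_j$ with $\|z\|<\delta$ and $J$ a finite subset of $(k_{i_0-1},\infty)$, and only afterwards choose $k_{i_0+1}<\cdots<k_n$ beyond $\max J$; then $\sum_{i<i_0}a_ix^{(i)}_{k_i}+a_{i_0}z+\sum_{i>i_0}a_ix^{(i)}_{k_i}$ is a convex combination of vectors coming from admissible configurations, hence has norm at most $\|\sum_i a_ie_i\|$ up to the stabilization error, while being $|a_{i_0}|\delta$-close to the suppressed vector; this avoids norming functionals and compactness entirely. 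Third, the assertion at the end of Stage 1 that $(e_i)$ is a monotone basic sequence is premature: that the limit is a norm and that $(e_i)$ is (monotone) basic should be deduced afterwards from the suppression inequality together with $\|e_i\|=\lim_j\|x^{(i)}_j\|=1$, which gives $\|\sum_i a_ie_i\|\ge\max_i|a_i|$.
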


  We call a basic sequence $(e_i)$
{\em $c$-suppression unconditional,} for some $c\ge 1$, if
for any $(a_i)\subset c_{00}$ and any $A\subset \N$
$$\Big\|\sum_{i\in A} a_i e_i\Big\|\le c \Big\|\sum_{i=1}^\infty a_i e_i\Big\|.$$
We call $(e_i)$ $c$-unconditional if for any $(a_i)\subset c_{00}$ and any $(\sigma_i)\in\{\pm 1\}^\N$
$$\Big\|\sum_{i=1}^\infty   a_i e_i\Big\|\le c \Big\|\sum_{i=1}^\infty  \sigma_i a_i e_i\Big\|.$$
Note that a $c$-unconditional basic sequence is $c$-suppression unconditional.

The following important result  was shown in \cite{FOSZ2017} and it is an integral ingredient of the proof of Theorem \ref{thmB}.
\begin{thm}\label{T:3.6} \cite{FOSZ2017}*{Theorem 4.6}
If a separable Banach space $X$ does not contain any isomorphic copy of $\ell_1$ and    all the asymptotic models generated by normalized weakly null arrays are
equivalent to the $c_0$ unit vector basis, then $X$ is asymptotically $c_0$.
\end{thm}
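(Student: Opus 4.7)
The plan is to argue by contradiction: assume $X$ is separable, omits an isomorphic copy of $\ell_1$, and every asymptotic model arising from a normalized weakly null array in $X$ is equivalent to the $c_0$ unit vector basis, yet $X$ is not asymptotic-$c_0$. By the negation of Definition \ref{D:3.1} there is a sequence $k_n\to\infty$ and, for each $n$, an element $E_n\in\{X\}_{k_n}$ whose monotone normalized basis is not $n$-equivalent to the $\ell_\infty^{k_n}$-unit vector basis. The goal is to manufacture from this data a single infinite weakly null array in $S_X$ whose asymptotic model cannot be equivalent to $c_0$, contradicting the hypothesis.

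First I would realize each $E_n$ as the common equivalence class of branches of a normalized weakly null tree $\mathcal{T}_n=(x^{(n)}_{\nb}:\nb\in[\N]^{\le k_n}\setminus\{\emptyset\})$ in $S_X$. The analogous assertion for reflexive spaces is Lemma \ref{L:8.2}; here the role of reflexivity is played by Rosenthal's theorem, which is applicable because $X$ omits $\ell_1$: it supplies weakly Cauchy subsequences of every bounded sequence, from which genuinely weakly null sequences are obtained by subtracting weak$^*$-limits in $X^{**}$ and passing to differences. Coupling this replacement with the game characterization in Remark \ref{R:3.2}(c) and the separability of $X$, for each $n$ one obtains such a $\mathcal{T}_n$ whose branches are, say, $2$-equivalent to the basis of $E_n$, and hence not $n/2$-equivalent to the $\ell_\infty^{k_n}$-basis.

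The central step is to splice the family $\{\mathcal{T}_n\}$ into a single infinite weakly null array $(y^{(i)}_j:i,j\in\N)\subset S_X$ so that for each $n$ every sufficiently late staircase $(y^{(i)}_{j_i})_{i=1}^{k_n}$ (with $k_n\le j_1<\cdots<j_{k_n}$) reproduces, up to an arbitrarily small perturbation, a branch of $\mathcal{T}_n$. The construction is iterative: rows are drawn from nodes of the various $\mathcal{T}_n$'s (each node being weakly null by definition), and a nested Ramsey plus diagonal extraction stabilises the staircases across $n$. Applying Proposition \ref{P:3.5} to the resulting array yields a subarray with a $1$-suppression unconditional asymptotic model $(e_i)$; by hypothesis $(e_i)$ is $K$-equivalent to the $c_0$-basis for some $K\ge 1$. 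Consequently every length-$k_n$ staircase is approximately $K$-equivalent to the $\ell_\infty^{k_n}$-basis, so every branch of $\mathcal{T}_n$ is $2K$-equivalent to it, and therefore the basis of $E_n$ is $4K$-equivalent to $\ell_\infty^{k_n}$. This contradicts the failure of $n$-equivalence as soon as $n>4K$, completing the proof.

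The main obstacle is the mismatch between the index-shape of branches of a tree (which follow a strictly increasing tuple $\nb=\{m_1<\cdots<m_{k_n}\}$) and that of staircases of an array (picking one column index per row across distinct rows). Forcing a staircase to recover an actual branch, while simultaneously preserving weak nullness of every row, requires coupling the global diagonalization across $n$ with the internal ordering inside each $\mathcal{T}_n$, i.e.\ a $k_n$-fold Ramsey extraction within $\mathcal{T}_n$ before interleaving into the global array. This combinatorial bookkeeping is the technical heart of the argument, and is also where the $1$-suppression unconditionality furnished by Proposition \ref{P:3.5} is crucial, in order to upgrade approximate recovery of a branch as a staircase to an honest equivalence of bases.
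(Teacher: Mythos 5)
This theorem is not proved in the paper at all: it is quoted from \cite{FOSZ2017}, so there is no internal proof to compare with, and your argument has to stand on its own. It does not, and the failure is structural rather than technical. Your construction never uses anything specific about $c_0$: the hypothesis that all asymptotic models are equivalent to the $c_0$ basis enters only at the very end, as a black box applied to the array you claim to build, and the ``splicing'' of arbitrary weakly null trees $\mathcal{T}_n$ into a single array whose staircases reproduce branches would, if it worked, work verbatim with $\ell_2$ (or any $\ell_p$) in place of $c_0$. That would prove that a separable reflexive space all of whose asymptotic models are equivalent to the $\ell_2$ basis is asymptotic-$\ell_2$, which is false: Remark \ref{Rem:3.11} records a reflexive space whose asymptotic models are all isometrically equivalent to the $\ell_2$ basis but which has $\ell_p^n$ in its $n$-th asymptotic structure for every $p\neq 2$. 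The obstruction you defer as ``combinatorial bookkeeping'' is exactly the content of the theorem: a tree generated by an array has the very special property that $x_{\nb}$ depends only on $|\nb|$ and $\max(\nb)$, whereas the vectors of a general weakly null tree depend on the whole predecessor path, and no Ramsey or diagonal refinement can erase that dependence. Any correct proof must exploit the extremal position of the $c_0$-norm among suppression-unconditional bases (this is precisely why the paper calls the result surprising), and the argument in \cite{FOSZ2017} does so in an essential way; a formal tree-to-array conversion cannot.

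There is also a gap earlier, in the step where you realize each $E_n\in\{X\}_{k_n}$ by a normalized weakly null tree with branches $2$-equivalent to its basis. Lemma \ref{L:8.2} uses reflexivity (via Eberlein--\v{S}mulyan weak compactness of the first levels), and Remark \ref{R:3.2}(d) requires $X^*$ separable; a separable space not containing $\ell_1$ may well have nonseparable dual (e.g.\ the James tree space), so neither applies. Your proposed substitute via Rosenthal's theorem does not deliver what is needed: for a weakly Cauchy sequence the weak$^*$ limit lies in $X^{**}$ and in general not in $X$, and passing to consecutive differences produces vectors which may have arbitrarily small norm and which are in any case not the vectors that the winning strategy certified to be $(1+\vp)$-equivalent to the basis of $E_n$; neither normalization nor the branch equivalence survives this operation without further argument. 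So even the preliminary reduction from the asymptotic-structure game to weakly null trees is unjustified under the stated hypotheses, independently of the main obstruction above.
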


Asymptotic models can be seen as a  generalization of {\em spreading models}, a notion which was introduced much earlier by Brunel  and Sucheston  \cite{BrunelSucheston1974}.
Spreading models are asymptotic models for arrays with identical rows.

\begin{defin}\label{D:3.7}  \cite{BrunelSucheston1974}
Let $E$ be a Banach space with a normalized basis $(e_i)$ and let
$(x_i)$ be a basic sequence in a Banach space $X$.   We say that $E$ with its basis $(e_i)$ is a {\em spreading model of $(x_i)$},  if  there is a   null-sequence $(\vp_n)\subset(0,1)$, so that for  all $n$, all $(a_i)_{i=1}^n \subset[-1,1]$ and $n\le k_1<k_2<\ldots<k_n$, it follows that
 \begin{equation*}
 \Bigg|\Big\|\sum_{i=1}^n a_i x_{k_i}\Big\|_X  - \Big\| \sum_{i=1}^n a_i e_i \Big\|_E\Bigg|<\vp_n
 \end{equation*}
or, in other words, if
$$\lim_{k_1\to\infty}\lim_{k_2\to\infty} \ldots \lim_{k_n\to\infty} \Big\|\sum_{j=1}^n a_j x_{k_j}\Big\|_X=\Big\|\sum_{j=1}^n a_j e_j\Big\|_E.$$
\end{defin}
Using Ramsey's Theorem it is easy to see that every normalized basic sequence has a subsequence which admits a spreading model, which of course also follows form the above cited result in \cite{HalbeisenOdell2004}. A spreading model $E$ with basis $(e_i)$   generated by a normalized weakly null sequence is {\em 1-suppression unconditional} \cite{BeauzamyLapreste1984}*{Proposition 1, p. 24}.

Let $k\in\N$ and let $\big(x^{(i)}_j:i=1,2,\ldots, k, j\in \N\big)\subset S_X$   be a normalized  weakly null array of height $k$. We extend this array to an infinite array $\big(x^{(i)}_j:i\in \N, j\in \N\big)$, by letting $$x^{(sk+i)}_j=x^{(i)}_j, \text{ for $s\in\N$ and $i=1,2, \ldots, k$.}$$ By Proposition \ref{P:3.5}  we can pass to a subarray  $(z^{(i)}_{j}: i\in\N, j\in\N)$  of  $(x^{(i)}_{j}: i\in\N, j\in\N)$ which admits an asymptotic model $(e_j)$.
Now letting $e^{(i)}_{j}= e_{(j-1)k+ i}$, for $i=1,2, \ldots, k$ and $j\in\N$ we observe that
the array $(e^{(i)}_j)_{i,j\in\N}$ is the {\em joint spreading model of  $(z^{(i)}_{j}: i\in\N, j\in\N)$}, a notion introduced and discussed  in \cite{AGLM2017}. We recall the definition of {\em joint spreading models} and will first recall the definition of {\em plegmas}.

\begin{defin}\label{D:3.8} \cite{ArgyrosKanellopoulosTyros2013}*{Definition 3}
Let $k,m\in\N$ and $s_i = (s^{(i)}_1, s^{(i)}_2,\ldots,s^{(i)}_m)\subset \N$ for $i=1,\ldots, k$. The family $(s_i)_{i=1}^k$ is called a {\em plegma } if
$$s^{(1)}_1 < s_1^{(2)} < \cdots< s_1^{(k)} < s_2^{(1)}< s_2^{(2)}<\cdots< s_2^{(k)}<\cdots<s_m^{(1)} < s^{(2)}_m < \cdots < s^{(k)}_m.$$
\end{defin}

\begin{defin}\label{D:3.9} \cite{AGLM2017}*{Definition 3.1}
Let   $\big(x_j^{(i)}: 1\kleq i\kleq k, j\kin\N\big) $ and $\big(e_j^{(i)}: 1\kleq i\kleq  k, j\kin\N\big)$ be two normalized arrays in the Banach spaces $X$, and $E$, respectively, whose rows are normalized and basic. We say that  $(x_j^{(i)}:1\kleq i\kleq  k,  j\kin\N) $  {\em generates  $(e_j^{(i)}: 1\kleq i\kleq  k, j\kin\N)$  as a joint spreading model} if there exists a null sequence of positive real numbers $(\vp_m)_{m=1}^\infty$ so that for every $m\in\N$, every  plegma $(s_i )_{i=1}^k$, $s_i=(s^{(i)}_j :j=1,2,\ldots, m)$ for $1\leq i\leq k$, with $\min(s_1)=s^{(1)}_1 \geq m$, and scalars $((a_j^{(i)})_{j=1}^m)_{i=1}^k$ in $[-1,1]$ we have
\[\Bigg|\Big\|\sum_{j=1}^m\sum_{i=1}^k a_j^{(i)}x^{(i)}_{s^{(i)}_j}\Big\|_X - \Big\|\sum_{j=1}^m\sum_{i=1}^ka_j^{(i)}e^{(i)}_{j}\Big\|_E\Bigg|<\vp_m.\]
\end{defin}

\begin{rem}\label{Rem:3.10}
Note that if  $\big(x_j^{(i)}: 1\kleq i\kleq  k, j\kin\N\big) $ generates  $\big(e_j^{(i)}:  1\kleq i\kleq  k, j\kin\N\big)$ as a joint spreading model, then  $(e_j^{(i)})_{j=1}^\infty$   is a spreading model of $(x^{(i)}_j)_{j=1}^\infty$, for $i=1,2,\ldots, k$.
\end{rem}

In the next remark we discuss the differences between asymptotic and sequential asymptotic properties.
\begin{remark}\label{Rem:3.11}  Assume that  $X$ is a separable reflexive space. Then, by observation (d) in Remarks \ref{R:3.2}, the property that $X$ is asymptotically $\ell_p$, for some $1\le p\le \infty$ (as usual replace $\ell_\infty$ by $c_0$ if $p=\infty$), is equivalent to the property that there is a $C\ge 1$, so that for every $k\in \N$ every weakly null tree $(x_{\nb}:\nb\in[\N]^{\le k})$  of height $k$  can be refined (as defined in subsection \ref{SS:2.1}) to a tree $(x_{\mb}:\mb\in[\M]^{\le k })$, $\M\in[\N]^\omega$, which has the property that each branch is $C$-equivalent to the $\ell_p^k$ unit vector basis.

Secondly we consider  the property of a Banach space $X$ that every asymptotic model generated by a weakly null array is $C$-equivalent to  the $\ell_p$-unit vector basis, for some $1\le p<\infty$,  or  the $c_0$-unit vector basis. For a normalized  weakly null array  $(x^{(i)}_j: i,j\in\N)$ we put $x_{\mb}=x^{(i)}_{\max(\mb)}$ for $\mb\in [\N]^{i}$ and call for $k\in\N$ the tree $(x_{\nb}:\nb\in[\N]^{\le k})$ {\em the tree of height $k$ generated by the array $(x^{(i)}_j: i,j\in\N)$}. Note that $x_{\nb}$ for $\nb\in[\N]^{\le k}$ only depends on $\max(\nb)$ and the cardinality of $\nb$, but not on the predecessors of $\nb$. Then, by a straightforward diagonalization argument, one shows that the property that every asymptotic model generated by a weakly null array is $C$-equivalent to  the $\ell_p$-unit vector basis for some $C \ge 1$, is equivalent with the property that every tree of height $k$, generated by a normalized weakly null array  has a refinement all of whose branches are $C$-equivalent to the $\ell_p^k$-unit vector basis, for some $C \ge 1$.

Thus, the property that the asymptotic models generated by normalized weakly null arrays is $C$-equivalent to  the $\ell_p$-unit vector basis, is a property of specific weakly null trees. Theorem \ref{T:3.6} is therefore  a surprising result, and its proof relies on the fact that the $c_0$-norm is somewhat extremal. Usually it is not possible to deduce from  a sequentially asymptotic property of a Banach space an asymptotic property. For example, in a forthcoming paper, we build a reflexive space $X$, all of whose asymptotic models are isometrically equivalent to the $\ell_2$-unit vector basis, but for given $1\le p\le \infty$, $p\not=2$, $X$ has $\ell_p^n$ in its $n$-th asymptotic structure.



\end{remark}

\section{Proof of Theorems A and B}\label{S:4}
This section is devoted to proving Theorem \ref{thmB} and then obtaining Theorem \ref{thmA} as a corollary.
The proof is based on the main argument of \cite{BaudierLancienSchlumprecht2018} and on the above cited result in \cite{FOSZ2017} (see Thoerem \ref{T:3.6} in our paper) that  connects asymptotic properties with properties of arrays.

The following  lemma  includes  a  well known refinement  argument which is crucial for the proof of the main result. For completeness we include a proof.

\begin{lem}\label{L:4.1}  Let $X$ be a reflexive Banach space,  $k\in\N$,  and $f: [\N]^k\to X$  have a bounded image.
Then there exist $\M\kin[\N]^\omega$ and a weakly null tree $(y_{\mb}:\mb\in[\M]^{\le k})$,
  so that
  $f(\mb)=y_{\emptyset}+\sum_{i=1}^k y_{\{m_1,\ldots, m_i\}}$,  for all $\mb\in [\M]^k$.

  Moreover, if we equip $[\N]^k$ with $d^{(k)}_\H$ then for all $\mb\in [\M]^{\leq k}\setminus\{\emptyset\}$ we have $\|y_{\mb}\| \leq \mathrm{Lip}(f)$.
\end{lem}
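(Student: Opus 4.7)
The plan is a standard iterated weak-limit/telescoping argument; reflexivity supplies the weak compactness, and the Hamming metric supplies the norm bound through single-coordinate changes.

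First, I would pass to the separable reflexive subspace $Z := \overline{\spa}\, f([\N]^k)$ of $X$, so that $Z^*$ is separable, and fix a countable dense subset $\{z^*_r : r \in \N\} \subset B_{Z^*}$. Starting from the innermost variable and working outward, for each $r$ and each $0 \le i \le k$ the bounded scalar family
\[
z^*_r\big(f(m_1, \ldots, m_i, j_{i+1}, \ldots, j_k)\big)
\]
admits iterated limits along some infinite subset of $\N$; a standard diagonal/Ramsey extraction over $i$ and $r$ produces a single $\M \in [\N]^\omega$ that works simultaneously. Since $Z$ is reflexive and $f$ is bounded, these scalar limits are realized by well-defined vectors $g_i(m_1, \ldots, m_i) \in Z$ with $g_k = f|_{[\M]^k}$ and
\[
g_{i-1}(m_1, \ldots, m_{i-1}) = \text{weak-}\lim_{\substack{j\in\M\\ j\to\infty}} g_i(m_1, \ldots, m_{i-1}, j) \qquad (1 \le i \le k).
\]

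I would then define the tree by telescoping: $y_\emptyset := g_0$ (a constant vector) and
\[
y_{\{m_1,\ldots,m_i\}} := g_i(m_1,\ldots,m_i) - g_{i-1}(m_1,\ldots,m_{i-1}) \qquad (1 \le i\le k,\ \{m_1,\ldots,m_i\}\in [\M]^i).
\]
The identity $f(\mb) = y_\emptyset + \sum_{i=1}^k y_{\{m_1,\ldots,m_i\}}$ for $\mb\in[\M]^k$ is immediate from $g_k = f$, and each node $(y_{\nb\cup\{j\}})_{j>\max(\nb),\, j\in\M}$ is weakly null by the defining weak-limit identity for $g_{i-1}$, so the tree $(y_{\mb})_{\mb\in[\M]^{\le k}}$ is weakly null.

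The norm bound is where the Hamming metric enters. Writing
\[
y_{\{m_1,\ldots,m_i\}} = \text{weak-}\lim_{\substack{j\in\M\\ j>m_i}} \big[g_i(m_1,\ldots,m_i) - g_i(m_1,\ldots,m_{i-1},j)\big],
\]
I would then unfold each of the two $g_i$-terms as further iterated weak limits of $f$-values over common tail indices $j_{i+1} < \cdots < j_k$ in $\M$, all greater than $j$. For every such tail, the two $k$-tuples
\[
(m_1,\ldots,m_{i-1}, m_i, j_{i+1},\ldots,j_k) \quad \text{and} \quad (m_1,\ldots,m_{i-1}, j, j_{i+1},\ldots,j_k)
\]
differ only in the $i$-th coordinate, so their Hamming distance equals $1$ and the corresponding $f$-values differ in norm by at most $\Lip(f)$. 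Weak lower semicontinuity of the norm, applied successively to each iterated weak limit, yields $\|g_i(m_1,\ldots,m_i) - g_i(m_1,\ldots,m_{i-1},j)\| \le \Lip(f)$, and one further application gives $\|y_{\{m_1,\ldots,m_i\}}\| \le \Lip(f)$.

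The main technical obstacle is the extraction of a single $\M$ that witnesses all the iterated weak limits at every level simultaneously; this is routine once $Z^*$ is separable. The conceptual heart of the Lipschitz bound is the simple observation that changing any one coordinate of a $k$-tuple is a Hamming-distance-$1$ move regardless of which coordinate and regardless of depth, which is precisely why the single constant $\Lip(f)$ propagates uniformly through every level of the tree.
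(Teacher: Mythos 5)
Your proposal is correct and follows essentially the same route as the paper's proof: iterated weak limits (available by reflexivity and boundedness), a telescoping definition of the tree, and weak lower semicontinuity of the norm applied to differences of $f$-values at Hamming-distance-$1$ pairs to get $\|y_{\mb}\|\le \Lip(f)$. The only difference is organizational: the paper extracts the infinite set $\M$ by induction on $k$ with nested infinite subsets, whereas you do a one-shot diagonal/Ramsey extraction using separability of $Z^*$, which is a legitimate equivalent bookkeeping of the same refinement argument.
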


\begin{proof} We prove the claim by induction for all $k\kin\N$.  If $k=1$, we can take a subsequence   $(x_{n})$ of $\big(f(\{n\})\big)_{n\in\N}$ which converges to some $y_\emptyset\kin X$. Then put $y_{\{n\}}=x_n-y_\emptyset$.

Assume our claim to be true for $k-1$, with  $k\kin\N$, and let $f: [\N]^{k}\to X$ have a bounded image.  We put $l_i=i$, for $i=1,2,\ldots, k-1$, and choose $\L_{k-1}\in [\{k,k+1,\ldots \}]^\omega$ so that $x_{\{1,2,\ldots, k-1\}}=w-\lim_{l\to\infty, l\in \L_{k-1}} f(\{1,2,\ldots, k-1\}\cup\{l\})$ exists. Then we can recursively choose for each $n \geq k$, $l_n\in\N$, $\L_n\in[\L_{n-1}]^\omega$, with $l_n\in \L_{n-1}$ and $l_n<\min(\L_n)$, so that for each $\mb\subset \{l_1,l_2,\ldots, l_n\}$, with $\#\mb=k-1$, $x_{\mb}=w-\lim_{l\to\infty, l\in \L_n} f(\mb\cup\{l\})$ exists. Let $\L=\{l_j:j\kin\N\}$ and put $y_{\mb}=f(\mb)- x_{\{m_1,m_2,\ldots, m_{k-1}\}}$  for $\mb\keq\{m_1,m_2,\ldots, m_{k}\}\kin[\L]^{k} $.

Finally we apply the induction hypothesis to $f':[\L]^{k-1}\to X$, $\mb\mapsto x_{\mb}$, which provides us with an infinite $\M\subset \L$ and  a weakly null tree $(y_{\mb}:\mb\in[\M]^{k-1})$ so that $x_{\mb}=\sum_{i=0}^k y_{\{m_1,m_2,\ldots, m_i\}}$ for all ${\mb}=\{m_1,m_2,\ldots, m_{k-1}\}\in[\M]^{k-1}$ and thus,
\[f(\mb)=y_{\mb}+x_{\{m_1,m_2,\ldots, m_{k-1}\}}=\sum_{i=0}^{k} y_{\{m_1,m_2,\ldots, m_i\}}
\text{ for all $\mb=\{m_1,m_2,\ldots, m_{k}\}\in[\M]^{k}$.}\]

To prove the second part of the statement let $\mb = \{m_1,m_2,\ldots, m_i\}$ in $[\M]^k\setminus \{\emptyset\}$ and put $\mb'=\{m_1,m_2,\ldots, m_{i-1}\}$. It follows from the lower semicontinuity of the norm with respect to the weak topology that
\begin{align*}
\|y_{\mb}\|&=
\Big\|w\text{\;-}\lim_{n_i\to\infty}\lim_{n_{i+1}\to\infty}\!\!\!\!\!\cdots\!\!\! \lim_{n_k\to\infty}\Big( f(\mb\cup\{n_{i+1},\ldots, n_k\})- f(\mb'\cup\{n_i,n_{i+1},\ldots, n_k\})\Big)\Big\|\\
  &\le \limsup_{n_i\to\infty}\limsup_{n_{i+1}\to\infty}\cdots \limsup_{n_k\to\infty}\big\|f(\mb\cup\{n_{i+1},\ldots, n_k\})- f(\mb'\cup\{n_i,n_{i+1},\ldots, n_k\})\big\|\\
  &\le \limsup_{n_i\to\infty}\limsup_{n_{i+1}\to\infty}\cdots  \limsup_{n_k\to\infty} \mathrm{Lip}(f)d^{(k)}_\H(\mb\cup\{n_{i+1},\ldots, n_k\},\mb'\cup\{n_i,n_{i+1},\ldots, n_k\})\\
  &= \mathrm{Lip}(f).
\end{align*}
\end{proof}

For the proof of Theorem \ref{thmB}  a slightly weaker version of the next result would be sufficient, but its proof would not be significantly easier.

\begin{lem}\label{L:4.2}
Let $X$ be a $C$-asymptotic-$c_0$ Banach space for some $C\ge 1$, $k\in\N$, and let also $(x_{\nb}:\nb\in[\N]^{\leq k})$ be a bounded weakly null tree. Then for every $\vp>0$ there exists $\L\in[\N]^{\omega}$ so that for every $\mb=\{m_1,\ldots,m_k\}\in[\L]^k$ and every $F\subset\{1,\ldots,k\}$ we have \[\Big\|\sum_{i\in F}x_{\{m_1,\ldots,m_i\}}\Big\| \leq (C+1+\vp)\max_{i\in F}\|x_{\{m_1,\ldots,m_i\}}\|.\]
\end{lem}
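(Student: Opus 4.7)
My plan is to combine Ramsey's theorem with the asymptotic-$c_0$ characterization of tree branches from Remark \ref{R:3.2}(d), so as to reduce Lemma \ref{L:4.2} to an $\ell_\infty^k$-type estimate on a suitably normalized version of the tree.

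First, I would apply Ramsey's theorem iteratively at each level to find $\L_1 \in [\N]^\omega$ and constants $a_i \ge 0$ (for $i=1,\ldots,k$) with $\|x_\nb\| \in [a_i, a_i+\eta]$ for every $\nb \in [\L_1]^i$, where $\eta > 0$ is a small stabilization parameter to be chosen in terms of $\vp$ and $k$. I would then build a normalized weakly null tree $(\tilde x_\nb)_{\nb \in [\L_1]^{\le k}\setminus\{\emptyset\}}$ as follows: on the ``large'' levels (those with $a_i > 0$, so $\|x_\nb\|$ is uniformly bounded below), set $\tilde x_\nb = x_\nb/\|x_\nb\|$; on the ``small'' levels (where $a_i = 0$), replace the vectors with a fresh normalized weakly null sequence $(u^{(i)}_j)_j$ in $X$ (which exists because $X$ is infinite-dimensional) and set $\tilde x_\nb = u^{(i)}_{\max \nb}$. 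A further diagonal Ramsey refinement — stabilizing the limits of the norms along each node — is needed to ensure that the first type of levels genuinely gives a weakly null family after division.

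Because $X$ is $C$-asymptotic-$\co$, the characterization in Remark \ref{R:3.2}(d) together with Ramsey's theorem yields $\L_2 \in [\L_1]^\omega$ for which every branch of $(\tilde x_\nb)_{\nb \in [\L_2]^{\le k}}$ is $(C+\vp/2)$-equivalent to the $\ell_\infty^k$-unit vector basis. Now fix $\mb = \{m_1,\ldots,m_k\} \in [\L_2]^k$ and $F \subset \{1,\ldots,k\}$, and let $M = \max_{i\in F}\|x_{\{m_1,\ldots,m_i\}}\|$. Choosing $i^* \in F$ with $\|x_{\{m_1,\ldots,m_{i^*}\}}\| = M$ and pulling out the $i^*$-th term via the triangle inequality yields
\begin{equation*}
\Big\|\sum_{i\in F}x_{\{m_1,\ldots,m_i\}}\Big\| \le M + \Big\|\sum_{i\in F\setminus\{i^*\}}x_{\{m_1,\ldots,m_i\}}\Big\|.
\end{equation*}
On the large levels one has $x_{\{m_1,\ldots,m_i\}} = \|x_{\{m_1,\ldots,m_i\}}\|\tilde x_{\{m_1,\ldots,m_i\}}$, so applying the $\ell_\infty$-equivalence of the sub-branch with coefficients $c_i = \|x_{\{m_1,\ldots,m_i\}}\|$ controls the principal contribution by $(C+\vp/2)M$. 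The two pieces combine to give $(C+1+\vp)M$, provided the discrepancy coming from the small levels (where $\|x_{\{m_1,\ldots,m_i\}}\|\tilde x_{\{m_1,\ldots,m_i\}}$ is a scaled fresh vector rather than $x_{\{m_1,\ldots,m_i\}}$) can be absorbed into the $+1$ slack.

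The principal obstacle is precisely this last point: the naive triangle bound on the small-level discrepancy is of order $k\eta$, an absolute error that fails to scale with $M$ when $M$ is small. One handles this either by refining further so that the small levels are uniformly zero (making the discrepancy vanish), or by applying the asymptotic-$\co$ hypothesis a second time on a reduced tree of height $|J|$ extracted from the small levels to upgrade the triangle bound to an $\ell_\infty^{|J|}$-type estimate. Controlling this edge case $M = O(\eta)$ cleanly, so the overall error can be absorbed into the $+1$, is the delicate part of the argument.
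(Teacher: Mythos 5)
Your reduction has a genuine gap, and it sits exactly at the point you flag as ``delicate'' --- which is in fact the whole content of the lemma. First, the large/small dichotomy by the Ramsey-stabilized absolute value $a_i$ is the wrong dichotomy: the inequality to be proved is positively homogeneous, so a level whose norms are tiny but bounded away from zero poses no problem (normalize and use the $\ell_\infty^k$-estimate), whereas the genuinely problematic levels are those along which the node norms tend to $0$. On such levels neither of your remedies works: passing to a refinement cannot turn nonzero vectors into zero vectors, and you cannot apply the asymptotic-$c_0$ hypothesis ``a second time'', because a norm-null node need not be weakly null after normalization (e.g.\ $x_{\nb\cup\{j\}}=2^{-j}w$ for a fixed $w\in S_X$ has constant normalized node), so neither the tree characterization of Remarks \ref{R:3.2} (d) (which moreover presupposes a separable dual, not assumed in the lemma) nor the game characterization applies to them. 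Without any control on the \emph{relative} sizes of these small nonzero norms along a branch, your substituted tree $\tilde x$ carries no information about the actual vectors there, and the discrepancy is an absolute $O(k\eta)$ error that cannot be absorbed into the multiplicative $+1$.

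The paper closes exactly this gap by an adaptive, branch-wise selection instead of a uniform level-wise stabilization. One plays the vector game of Remarks \ref{R:3.2} (c), the space chooser following a winning strategy for $\ell_\infty^k$, and chooses $m_1<\cdots<m_k$ recursively: if $\limsup_n\|x_{\{m_1,\ldots,m_{i-1},n\}}\|>0$ one picks $m_i$ so that $x_{\{m_1,\ldots,m_i\}}/\|x_{\{m_1,\ldots,m_i\}}\|$ is within $\vp 2^{-i}$ of $S_{Y_i}$ (your ``large'' case, but decided along the branch, not by an absolute threshold); if instead the node is norm-null, one picks $m_i$ so far out that $\|x_{\{m_1,\ldots,m_i\}}\|\le \vp 2^{-i}\min\{\|x_{\{m_1,\ldots,m_j\}}\|:\ j<i,\ x_{\{m_1,\ldots,m_j\}}\neq 0\}$. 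Then for any $F$ the norm-null-case indices contribute at most one uncontrolled summand (the least such index $i_0$, which is what produces the $+1$), all later ones summing to at most $\vp\max_{i\in F}\|x_{\{m_1,\ldots,m_i\}}\|$, while the remaining terms are handled by the $\ell_\infty^k$-equivalence of $(y_i)_{i=1}^k$. Since this produces one good $\mb$ inside every infinite subset of $\N$, a single application of Ramsey's theorem at the outset upgrades it to all of $[\L]^k$. This adaptive choice of $m_i$ relative to the previously encountered nonzero norms on the same branch is the missing idea; a non-adaptive Ramsey stabilization with a fixed mesh $\eta$ cannot supply it.
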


\begin{proof}
We will just find one such $\mb$. This is sufficient by Ramsey's theorem, since such a set $\mb$ could be found in each infinite subset of $\N$. Let us play a $k$-round vector game in which the subspace player follows a winning strategy to force the vector player to choose a sequence $(C+\vp)$-equivalent to the unit vector basis of $\ell_\infty^k$. In each step $i$ the subspace player picks a subspace $Y_i$ of finite codimension according to his or her winning strategy. The vector player picks $y_i\in Y_i$ according to the following scheme: recursively pick $m_1<\cdots<m_k$ so that one of the following holds:
\begin{itemize}
\item[(a)] If $\limsup_n\|x_{\{m_1,\ldots,m_{i-1},n\}}\| > 0$ pick $m_i$ (with $m_i>m_{i-1}$ if $i>1$) and $y_i$ in the unit sphere of $Y_i$ so that
$$\Big\|y_i - \frac{x_{\{m_1,\ldots,m_i\}}}{\|x_{\{m_1,\ldots,m_i\}}\|}\Big\| < \vp2^{-i} .$$
In the above argument we have used the following corollary of the Hahn-Banach Theorem. If $Y\in\mathrm{cof}(X)$ and $(z_i)_{i=1}^\infty$ is a weakly null sequence then $\lim_i\mathrm{dist}(z_i,Y) \keq 0$. If in particular $(z_i)_i$ is normalized then $\lim_i\mathrm{dist}(z_i,S_Y) \keq 0$.

\item[(b)] If $\lim_{n}\|x_{\{m_1,\ldots,m_{i-1},n\}}\| = 0$ we distinguish between the following subcases:
\begin{itemize}
\item[(b1)]  if   $i=1$ or $x_{\{m_1,\ldots,m_j\}} = 0$, for  all $1\leq j<i$,  pick arbitrary $m_{i}$, so that  $m_i>m_{i-1}$ if $i>1$, and arbitrary $y_i$ in the unit sphere of $Y_i$, and
\item[(b2)] if $i>1$ and $x_{\{m_1,\ldots,m_j\}} \neq 0$, for some $1\leq j<i$, pick $m_i>m_{i-1}$ so that
$$\|x_{\{m_1,\ldots,m_i\}}\| < (\vp2^{-i})\min\big\{\|x_{\{m_1,\ldots,m_j\}}\|: 1\leq j< i\text{ with } x_{\{m_1,\ldots,m_j\}}\neq 0\big\}$$
and pick an arbitrary $y_i$ in the unit sphere of $Y_i$.
\end{itemize}
\end{itemize}
It follows that the sequence $(y_i)_{i=1}^k$ is $(C+\vp)$-equivalent to the unit vector basis of $\ell_\infty^k$.

Let now $F\subset\{1,2,\ldots,k\}$. Set
\begin{align*}
F_1 = \{i\in F:\text{ (a) is satisfied}\} \text{ and }
\bar F_2 = \{i\in F:\text{ (b) is satisfied and }x_{\{m_1,\ldots,m_i\}}\neq 0\}.
\end{align*}
Set $i_0 = \min(\bar F_2)$ and $F_2 = \bar F_2\setminus\{i_0\}$, if $\bar F_2$ is non-empty, otherwise let $F_2=\emptyset$. We now calculate
\begin{align*}
\Big\|\sum_{i\in F}x_{\{m_1,\ldots,m_i\}}\Big\| &\leq \Big\|\sum_{i\in F_1}x_{\{m_1,\ldots,m_i\}}\Big\| + \|x_{\{m_1,\ldots,m_{i_0}\}}\| +  \Big\|\sum_{i\in F_2}x_{\{m_1,\ldots,m_i\}}\Big\|\\
&\leq \Big\|\sum_{i\in F_1}\|x_{\{m_1,\ldots,m_i\}}\|y_i\Big\| + \Big\|\sum_{i\in F_1}x_{\{m_1,\ldots,m_i\}} - \|x_{\{m_1,\ldots,m_i\}}\|y_i\Big\|\\
&\phantom{=} + \|x_{\{m_1,\ldots,m_{i_0}\}}\| + \sum_{i\in F_2}\frac{\vp}{2^i} \|x_{\{m_1,\ldots,m_{i_0}\}}\|\\
&\leq (C\kplus\vp)\max_{i\in F_1}\|x_{\{m_1,\ldots,m_i\}}\| + \!\sum_{i\in F_1}\frac{\vp}{2^i}\|x_{\{m_1,\ldots,m_i\}}\|
+ (1\kplus\vp)\|x_{\{m_1,\ldots,m_{i_0}\}}\|\\ &\leq (C+1+3\vp)\max_{i\in F}\|x_{\{m_1,\ldots,m_i\}}\|.
\end{align*}
An adjustment of $\vp$ yields the desired estimate.
\end{proof}

The following is one of the main statements presented in this paper.

\begin{thm}[Theorem \ref{thmB}]
\label{T:4.3}
A Banach space $X$ is reflexive and asymptotic-$c_0$ if and only if there exists $C\geq 1$ satisfying the following: for every $k\in\N$ and Lipschitz map $f:([\N]^k,d^{(k)}_\H)\to X$ there exists $\L\in[\N]^\omega$ so that
\begin{equation}
\label{E:4.3.1}
\diam\big( f([\L]^k)\big) \leq C\mathrm{Lip}(f).
\end{equation}
\end{thm}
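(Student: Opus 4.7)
For the forward implication, my plan is to chain Lemmas \ref{L:4.1} and \ref{L:4.2}. Given a Lipschitz map $f:[\N]^k\to X$ with $X$ reflexive and $C$-asymptotic-$c_0$, Lemma \ref{L:4.1} (whose proof uses reflexivity in an essential way to iteratively extract weak limits) yields an infinite subset $\M\kin[\N]^\omega$ and a weakly null tree $(y_\nb)_{\nb\in[\M]^{\leq k}}$ in $X$ satisfying $\|y_\nb\|\leq \mathrm{Lip}(f)$ for every $\nb\neq\emptyset$ and the telescoping identity $f(\mb)=y_\emptyset+\sum_{i=1}^k y_{\{m_1,\ldots,m_i\}}$ for all $\mb\in[\M]^k$. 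Applying Lemma \ref{L:4.2} to this tree then produces a further refinement $\L\in[\M]^\omega$ on which every branch-sum is controlled by $\|\sum_{i=1}^k y_{\{m_1,\ldots,m_i\}}\|\leq (C+1+\varepsilon)\mathrm{Lip}(f)$. Since every $f(\mb)$ with $\mb\in[\L]^k$ is then within $(C+1+\varepsilon)\mathrm{Lip}(f)$ of the common anchor $y_\emptyset$, one obtains $\diam(f([\L]^k))\leq 2(C+1+\varepsilon)\mathrm{Lip}(f)$, which is the desired inequality up to an adjustment of $\varepsilon$.

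For the converse, the central step is to deduce from the concentration inequality that every asymptotic model $(e_i)$ of $X$ generated by a normalized weakly null array is uniformly $2C$-equivalent to the $c_0$-unit vector basis. Given such a generating array $(x_j^{(i)})$ (passing to its extended infinite array as described before Definition \ref{D:3.8} when the original has finite height) and scalars $(a_i)_{i=1}^k$ with $\|(a_i)\|_\infty=1$, I consider the $2$-Lipschitz map $f(\mb)=\sum_{i=1}^k a_i x_{m_i}^{(i)}$; concentration supplies $\L\in[\N]^\omega$ with $\diam(f([\L]^k))\leq 2C$. Picking a plegma pair $\mb,\nb\in[\L]^k$ with $m_1<\cdots<m_k<n_1<\cdots<n_k$ all taken sufficiently large, the asymptotic model relation applied to the extended array (equivalently, the joint spreading model of Definition \ref{D:3.9}) gives $\|f(\mb)-f(\nb)\|\approx \|\sum_{i=1}^k a_i e_i-\sum_{i=1}^k a_i e_{k+i}\|$; by the $1$-suppression unconditionality of the asymptotic model from Proposition \ref{P:3.5}, this quantity dominates $\|\sum_{i=1}^k a_i e_i\|$, so $\|\sum_{i=1}^k a_i e_i\|\leq 2C$. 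The matching lower bound $\|(a_i)\|_\infty\leq\|\sum_{i=1}^k a_i e_i\|$ follows from suppression unconditionality combined with $\|e_i\|=1$, establishing the desired $2C$-equivalence.

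To finish the converse, a short extra argument rules out isomorphic copies of $\ell_1$ in $X$: a $K$-embedded copy of $\ell_1$ would force $f(\mb)=\sum_{i\leq k}e_{m_i}$ to have $\mathrm{Lip}(f)\leq 2K$ and $\diam(f([\L]^k))\geq 2k/K$ on every $\L$, contradicting concentration for $k>CK^2$. After reducing to the separable setting (any separable subspace of $X$ inherits the concentration inequality, so it suffices to conclude reflexivity and the asymptotic-$c_0$ property on separable subspaces, from which the general case follows), Theorem \ref{T:3.6} of Freeman--Odell--Sari--Zheng delivers the asymptotic-$c_0$ conclusion from the previous two facts. The most delicate remaining step is reflexivity: assuming for contradiction that $X$ were non-reflexive, Rosenthal's $\ell_1$-theorem together with the absence of $\ell_1$ subspaces provides a weakly Cauchy non-weakly convergent sequence $(x_n)\subset X$ whose weak-$\ast$ limit $x^{**}\in X^{**}\setminus X$ is at positive distance from $X$. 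A classical Brunel--Sucheston type dichotomy shows that such a sequence admits a subsequence whose spreading model is equivalent to the summing basis of $c_0$. Then $f(\mb)=\sum_{i=1}^k x_{m_i}$ is Lipschitz with constant at most $2\sup_n\|x_n\|$, while the same plegma-pair computation applied to this summing-basis-like spreading model forces $\diam(f([\L]^k))\gtrsim k\cdot\mathrm{dist}(x^{**},X)$ on every $\L\in[\N]^\omega$, contradicting concentration for $k$ large. This reflexivity step is the main obstacle and the part of the proof requiring the most careful quantitative bookkeeping, since the spreading model produced by a non-weakly-null sequence is not a priori suppression unconditional and one must rely on the explicit summing-basis structure.
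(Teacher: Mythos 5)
Your forward direction is the paper's own argument (Lemma \ref{L:4.1} followed by Lemma \ref{L:4.2} applied to full branches), and the asymptotic-$c_0$ half of your converse is sound: where the paper argues by contraposition and lower-bounds $\|f_k(\mb)-f_k(\nb)\|$ with a Hahn--Banach norming functional plus weak nullness, you argue directly that every asymptotic model is uniformly $c_0$, lower-bounding via the $1$-suppression unconditionality of the asymptotic model of the ``doubled'' array (the joint spreading model device); both computations are essentially the same, and your explicit exclusion of $\ell_1$ is a correct extra step that the paper gets for free because it establishes reflexivity first and only then invokes Theorem \ref{T:3.6}. Your separable reduction is acceptable but should be routed through Proposition \ref{P:3.3} (which needs reflexivity) to pass the asymptotic-$c_0$ conclusion from a separable subspace back to $X$.

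The genuine gap is in your reflexivity step. There is no ``classical Brunel--Sucheston type dichotomy'' asserting that a weakly Cauchy, non-weakly-convergent sequence has a subsequence whose spreading model is equivalent to the summing basis of $c_0$: Rosenthal's $c_0$-theorem only produces either a strongly summing subsequence or a \emph{convex block basis} equivalent to the summing basis, and the subsequence statement is simply false. For instance, in James's quasi-reflexive space $J$ (which contains no $\ell_1$), the sequence $x_n=e_1+\cdots+e_n$ is weakly Cauchy and not weakly convergent, yet for alternating signs $a_i=(-1)^i$ one has $\big\|\sum_{i=1}^n a_i x_{k_i}\big\|_J\approx \sqrt{n}$ while the summing-basis norm of $\sum_{i=1}^n a_i s_i$ stays bounded, so no subsequence of $(x_n)$ has a spreading model equivalent to the summing basis. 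Fortunately the estimate you want is true and needs none of this: with $f_k(\mb)=\sum_{i=1}^k x_{m_i}$, fix $\mb\in[\L]^k$ and let $\nb\in[\L]^k$ have all entries tending to infinity; then $f_k(\mb)-f_k(\nb)\to f_k(\mb)-kx^{**}$ in the $w^*$-topology of $X^{**}$, so by $w^*$-lower semicontinuity of the norm $\liminf\|f_k(\mb)-f_k(\nb)\|\ge k\,\mathrm{dist}(x^{**},X)$, which contradicts \eqref{E:4.3.1} for large $k$ since $\Lip(f_k)\le 2\sup_n\|x_n\|$. Alternatively, replace the whole step by the paper's route: James's characterization of non-reflexivity directly yields $(x_n)\subset B_X$ with $\big\|\sum_{i=1}^k x_{m_i}-\sum_{i=k+1}^{2k}x_{m_i}\big\|\ge k/2$ for all $\mb\in[\N]^{2k}$, which kills concentration without invoking Rosenthal or the absence of $\ell_1$.
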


\begin{proof}
We first assume that $X$ is reflexive and $B$-asymptotically $c_0$. Let $k\kin\N$ and let $f:([\N]^k,d^{(k)}_\H)\to X$ be a Lipschitz map. By Lemma \ref{L:4.1} there exist $\M\kin[\N]^\omega$ and a weakly null tree $(y_{\mb}:\mb\in[\M]^{\le k})$ so that  $f(\mb)=\sum_{\lb\preceq\mb} y_{\lb}$, for all $\mb \kin[\M]^{k}$, and   $\|y_{\mb}\| \leq \mathrm{Lip}(f)$, for all $\mb \kin[\M]^{\leq k}\setminus \{\emptyset\}$.
By Lemma \ref{L:4.2} we find $\L\in[\M]^\omega$, so that
$$\Big\|\sum_{i\in F}y_{\{m_1,\ldots,m_i\}}\Big\| \leq (B+2)\max_{i\in F}\|y_{\{m_1,\ldots,m_i\}}\| .$$
for all $\mb=\{m_1,m_2,\ldots, m_k\}\in[\L]^k$ and $F\subset\{1,\ldots, k\}$.Thus, for $\mb,\nb$ in $[\L]^k$ we have
\[\|f(\mb) - f(\nb)\| = \Big\|\sum_{\ub\preceq \mb}y_{\ub} - \sum_{\vb\preceq\nb}y_{\vb}\Big\|\leq \Big\|\sum_{\emptyset\prec\ub\preceq \mb}y_{\ub} \Big\|+\Big\|\sum_{\emptyset\prec\vb\preceq\nb}y_{\vb}\Big\|\leq 2(B+2)\mathrm{Lip}(f)\]
and so for $C = 2(B+2)$ the conclusion is satisfied.

To prove the converse, we  show that if either $X$ is not reflexive or $X$ is reflexive and not asymptotic-$c_0$, then there exists a  sequence $(f_k)$, $f_k:([\N]^k,d^{(k)}_\H)\to X$, $\text{Lip}(f_k)\le 1$, for $k\in\N$,
and
\begin{equation}\label{E:4.3.2}\inf_{\M\in[\N]^\omega} \diam\big(f_k([\M]^k)\big)\nearrow \infty, \text{ if $k\nearrow\infty$}.\end{equation}

Assume first that $X$ is non-reflexive. By James' characterization of reflexive spaces \cite{James1972}, there exists a sequence $(x_n)\subset B_X$ such that for all $k\ge 1$ and $\mb=\{m_1,m_2,\ldots, m_{2k}\}\in[\N]^{2k}$,
\begin{equation}\label{E:4.3.3}
\Big\|\sum_{i=1}^kx_{m_i}-\sum_{i=k+1}^{2k}x_{m_i}\Big\|\ge \frac{k}{2}.
\end{equation}
Define $f_k(\mb)=\frac12\sum_{i=1}^kx_{m_i}$, for $\mb=\{m_1,\ldots, m_k\}$ in $[\N]^k$. This map is $1$-Lipschitz with respect to $d^{(k)}_\H$
and \eqref{E:4.3.3} implies \eqref{E:4.3.2}.

Secondly assume  $X$ is  reflexive and not asymptotically-$c_0$.  By Proposition \ref{P:3.3} there is a separable subspace  of $X$ that is not asymptoptically-$c_0$, so we can assume that $X$ is separable.
By Theorem \ref{T:3.6} there exists a 1-suppression unconditional sequence $(e_i)_i$ that is not equivalent to the unit vector basis of $c_0$, and hence  $\lambda_k=\|\sum_{i=1}^ke_i\| \nearrow\infty$, if $k\nearrow\infty$, and that is generated as an asymptotic model of a normalized weakly null array $(x^{(i)}_j: i,j\kin\N)$ in $X$. Fixing $k \kin \N$ and $\delta >0$ and after passing to appropriate subsequences of the array, we may assume that for any  $k\le j_1<\cdots<j_k$ and any $a_1,\ldots,a_k$ in $[-1,1]$ we have
\begin{equation}\label{E:4.3.4}
\Bigg|\Big\|\sum_{i=1}^k a_ix_{j_i}^{(i)} \Big\|-\Big\|\sum_{i=1}^k a_i e_i \Big\|\Bigg|<\delta.
\end{equation}
Define now $f_k(\mb)=\frac12\sum_{i=1}^k x_{m_i}^{(i)}$ for $\mb=\{m_1,\ldots,m_k\}\in [\N]^k$. Note that $f$ is 1-Lipschitz for the metric $d_{\H}^{(k)}$.\\
Then, if $\mb=\{m_1,\ldots,m_k\}$, $\nb=\{n_1,\ldots,n_k\}$ and $F=\{i: m_i\neq n_i\}$ we have
$$f_k(\mb)-f_k(\nb)=\frac12\sum_{i\in F}x_{m_i}^{(i)} - \frac12\sum_{i\in F}x_{n_i}^{(i)}.$$
Using the fact that the array is weakly null and the Hahn-Banach theorem, for all $\M \in [\N]^\omega$, all $\mb$ in $[\M]^k$, we can find  $x^*\in S_{X^*}$ and $\nb \in [\M]^k$ such that
$$x^*\Big(\sum_{i\in F}x_{m_i}^{(i)} - \sum_{i\in F}x_{n_i}^{(i)}\Big)\ge
\Big\|\sum_{i\in F}x_{m_i}^{(i)}\Big\|-\delta.$$
Using equation (\ref{E:4.3.4}), we deduce that
$$\|f_k(\mb)-f_k(\nb)\|\ge \frac12\lambda_k-\delta.$$
If $\delta$ was chosen small enough, we obtain that for all $\M \in [\N]^\omega$  $\diam\big(f_k([\M]^k)\big)\ge \frac{\lambda_{k}}4$, which proves our claim.

\end{proof}

\begin{cor}[Theorem \ref{thmA}]
\label{C:4.4} Let $Y$ be a reflexive asymptotic-$\co$ Banach space. If $X$ is a Banach space that coarsely embeds into $Y$, then $X$ is also reflexive and asymptotic-$\co$.
\end{cor}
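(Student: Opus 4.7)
The plan is to derive Corollary \ref{C:4.4} from Theorem \ref{T:4.3} (Theorem \ref{thmB}), by showing that the metric concentration inequality for Lipschitz maps on the Hamming graphs is preserved under coarse embeddings. The guiding intuition is that Theorem \ref{thmB} provides a purely metric characterization of ``reflexive asymptotic-$c_0$'', and coarse embeddings are exactly designed to transport large-scale metric properties.

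First, since $Y$ is reflexive and asymptotic-$c_0$, Theorem \ref{T:4.3} yields a constant $C\geq 1$ such that every Lipschitz map from some $\H^\omega_k$ into $Y$ admits an infinite subset on which it has diameter at most $C\cdot\mathrm{Lip}(\cdot)$. Let $f\colon X\to Y$ be a coarse embedding with compression modulus $\rho_f$ and expansion modulus $\omega_f$. Given $k\in\N$ and a Lipschitz map $g\colon\H^\omega_k\to X$, I would exploit the scale invariance of the concentration inequality in $X$ to reduce to the case $\mathrm{Lip}(g)\le 1$ (by replacing $g$ with $g/\mathrm{Lip}(g)$, which is again a map into the Banach space $X$).

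Next, the key computational step is to estimate the Lipschitz constant of $h:=f\circ g\colon\H^\omega_k\to Y$. If $\mb,\nb\in[\N]^k$ are adjacent in Hamming (i.e.\ $d^{(k)}_\H(\mb,\nb)=1$), then $\|g(\mb)-g(\nb)\|_X\le 1$, so $\|h(\mb)-h(\nb)\|_Y\le \omega_f(1)$. For arbitrary $\mb,\nb$ I would chain along a path of length $d^{(k)}_\H(\mb,\nb)$ of Hamming-adjacent vertices, obtaining $\|h(\mb)-h(\nb)\|_Y \le d^{(k)}_\H(\mb,\nb)\cdot\omega_f(1)$, so that $\mathrm{Lip}(h)\le \omega_f(1)<\infty$.

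Applying the concentration inequality for $Y$ to $h$ yields $\M\in[\N]^\omega$ with $\diam\bigl(h([\M]^k)\bigr)\le C\omega_f(1)$. For any $\mb,\nb\in[\M]^k$, the definition of the compression modulus then gives $\rho_f\bigl(\|g(\mb)-g(\nb)\|_X\bigr)\le C\omega_f(1)$, and since $\lim_{t\to\infty}\rho_f(t)=\infty$, the quantity
\[
D:=\sup\bigl\{t\ge 0:\rho_f(t)\le C\omega_f(1)\bigr\}
\]
is finite and depends only on $f$ and $C$, not on $k$ or $g$. Therefore $\diam\bigl(g([\M]^k)\bigr)\le D$, and after undoing the rescaling this is exactly the concentration inequality of Theorem \ref{T:4.3} for $X$ with constant $D$. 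A second application of Theorem \ref{T:4.3}, this time in the converse direction, concludes that $X$ is reflexive and asymptotic-$c_0$. The only nontrivial point is the chaining argument ensuring that $f\circ g$ is honestly Lipschitz (and not merely coarsely so), which is what allows the concentration inequality for $Y$ to be applied directly; everything else is a scaling and compression bookkeeping.
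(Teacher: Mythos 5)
Your proposal is correct and follows essentially the same route as the paper's proof: rescale the Hamming-graph map by its Lipschitz constant, compose with the coarse embedding, use that $d^{(k)}_\H$ is an unweighted graph metric to bound the Lipschitz constant of the composition by $\omega_f(1)$, apply the concentration inequality in $Y$, and then use $\lim_{t\to\infty}\rho_f(t)=\infty$ to extract a uniform constant $D$ for $X$ before invoking the converse direction of Theorem \ref{T:4.3}. The only cosmetic difference is that the paper fixes $D$ in advance with $\rho_g(D)>C\omega_g(1)$ while you take $D=\sup\{t:\rho_f(t)\le C\omega_f(1)\}$, which is the same bookkeeping (and, like the paper, you should tacitly set aside constant maps $g$ when dividing by $\mathrm{Lip}(g)$).
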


\begin{proof}
Let $g:X\to Y$ be a coarse embedding with moduli
$\rho_g,\omega_g:[0,\infty)\to[0,\infty)$.
By Theorem \ref{T:4.3} the space $Y$ satisfies \eqref{E:4.3.1}, for
some constant $C\geq 1$. It is enough to show that the same is true for
$X$ and some   $D\geq 1$ such that $\rho_g(D) > C\omega_g(1)$.

Let $f:[\N]^k\to X$ be a non-constant Lipschitz map.
Take $h:[\N]^k\to Y$ with $h(\mb) = g(\mathrm{Lip}(f)^{-1}f(\mb))$.
Because $d^{(k)}_\H$ is an unweighted graph metric it follows that
$$\mathrm{Lip}(h) = \omega_{h}(1) \leq \omega_g\big(\mathrm{Lip}(f)^{-1}\omega_f(1)\big) = \omega_g\big(\mathrm{Lip}(f)^{-1}\mathrm{Lip}(f)\big) = \omega_g(1).$$
Pick $\L\kin[\N]^\omega$ so that for all $\mb,\nb\kin[\L]^k$ we have $\|h(\mb) - h(\nb)\| \kleq C\omega_g(1)$. On the other hand,
\[
\begin{split}
C\omega_g(1) &\geq \|h(\mb) - h(\nb)\| = \big\|g\big(\mathrm{Lip}(f)^{-1}f(\mb)\big) - g\big(\mathrm{Lip}(f)^{-1}f(\nb)\big)\big\|\\
&\geq \rho_g\big(\mathrm{Lip}(f)^{-1}\|f(\mb) - f(\nb)\|\big).
\end{split}
\]
Thus, $\mathrm{Lip}(f)^{-1}\|f(\mb) - f(\nb)\|\leq D$, or $\|f(\mb) - f(\nb)\| \leq D \mathrm{Lip}(f)$, for any $\mb,\nb\in[\L]^k$.
\end{proof}

A simple re-scaling argument (see the end of section 4 in  \cite{BaudierLancienSchlumprecht2018}) allows us to adapt the above proofs in order to show the following.

\begin{cor}\label{C:4.5} Let $Y$ be a reflexive asymptotic-$\co$ Banach space. If $X$ is a Banach space such that $B_X$ uniformly embeds into $Y$, then $X$ is also reflexive and asymptotic-$\co$.
\end{cor}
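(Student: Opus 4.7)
\textbf{(Proof proposal.)} I would argue by contrapositive, mimicking the second half of the proof of Theorem \ref{T:4.3}. Assume $X$ is either non-reflexive or reflexive but not asymptotic-$c_0$; the goal is to show that $B_X$ cannot uniformly embed into $Y$. The construction in the proof of Theorem \ref{T:4.3} already supplies a sequence of $1$-Lipschitz maps $f_k\colon ([\N]^k,d^{(k)}_\H)\to X$ with $\inf_{\M\in[\N]^\omega}\diam(f_k([\M]^k))\to\infty$. The essential additional observation is that, in both constructions, the image of $f_k$ stays in a ball of radius $M_k$ with $M_k\to\infty$ at the same rate as the inf-diameter. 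In the non-reflexive case, $f_k(\mb)=\tfrac12\sum_{i=1}^k x_{m_i}$ is an average of $k$ vectors in $B_X$, so $\|f_k(\mb)\|\leq k/2$ while $\inf\diam(f_k)\geq k/4$. In the reflexive-but-not-asymptotic-$c_0$ case, one first restricts the domain to $[\{k,k+1,\ldots\}]^k$, identified with $[\N]^k$ via the natural order isomorphism, so that the asymptotic model estimate \eqref{E:4.3.4} applies; this yields $\|f_k(\mb)\|\leq (\lambda_k+\delta)/2$ while retaining $\inf\diam(f_k)\geq \lambda_k/2-\delta$.

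Setting $\tilde f_k = f_k/M_k$ (with $M_k=k/2$ or $M_k=(\lambda_k+\delta)/2$ accordingly) produces maps $\tilde f_k\colon [\N]^k\to B_X$ with $\mathrm{Lip}(\tilde f_k)\leq 1/M_k\to 0$, yet $\inf_\M \diam(\tilde f_k([\M]^k))\geq c$ for some universal constant $c>0$ (e.g., $c=1/2$ for $k$ large enough). Composing with the uniform embedding $g\colon B_X\to Y$ gives $h_k = g\circ \tilde f_k\colon [\N]^k\to Y$. Because $d^{(k)}_\H$ is an unweighted shortest-path graph metric, $\mathrm{Lip}(h_k)$ equals the maximum displacement across an edge, so $\mathrm{Lip}(h_k)\leq \omega_g(1/M_k)\to 0$ by the uniform continuity of $g$.

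Theorem \ref{T:4.3} applied to $Y$ with constant $C_Y$ then produces $\L_k\in[\N]^\omega$ with $\diam(h_k([\L_k]^k))\leq C_Y\omega_g(1/M_k)\to 0$. On the other hand, the lower diameter bound on $\tilde f_k$ furnishes $\mb,\nb\in[\L_k]^k$ with $\|\tilde f_k(\mb)-\tilde f_k(\nb)\|\geq c/2$, hence $\|h_k(\mb)-h_k(\nb)\|\geq \rho_g(c/2)>0$. Since $\rho_g(c/2)$ is a positive constant independent of $k$, this contradicts the convergence of $\diam(h_k([\L_k]^k))$ to $0$, completing the contrapositive argument. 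The main technical point is securing the sharp norm bound on $f_k$ in the reflexive case: the trivial triangle-inequality estimate $\|f_k(\mb)\|\leq k/2$ is useless when $\lambda_k=o(k)$, since then $\inf\diam(\tilde f_k)\sim \lambda_k/k\to 0$ and the contradiction is lost. The passage to $[\{k,k+1,\ldots\}]^k$ so that \eqref{E:4.3.4} applies is precisely what forces $M_k\sim\lambda_k/2$ and makes the rescaling simultaneously send $\mathrm{Lip}(\tilde f_k)\to 0$ and keep $\inf\diam(\tilde f_k)$ uniformly bounded below.
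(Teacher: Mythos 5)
Your proposal is correct and is essentially the paper's intended argument: the authors prove Corollary \ref{C:4.5} by exactly the ``simple re-scaling'' of the maps $f_k$ built in the proof of Theorem \ref{T:4.3}, so that they take values in $B_X$ with Lipschitz constant tending to $0$ while the inf-diameter stays bounded below, and then compose with the uniform embedding and invoke the concentration inequality in $Y$. Your added care about the norm bound in the reflexive case (restricting to $[\{k,k+1,\ldots\}]^k$ so that \eqref{E:4.3.4} gives $\|f_k(\mb)\|\le(\lambda_k+\delta)/2$ rather than the useless bound $k/2$) is precisely the point that makes the rescaling work, and it is handled correctly.
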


\begin{remark}\label{R:7.5} For $k\in \N$, the Johnson graph of height $k$ is the set $[\N]^k$ equipped with the  metric defined by $d_{\J}^{(k)}(\mb,\nb)=\frac12 \sharp(\mb \Delta \nb)$ for $\mb,\nb \in [\N]^k$. It is proved in \cite{BaudierLancienSchlumprecht2018} that there is a constant $C\ge 1$ such that for any $k\in \N$ and $f \colon ([\N]^k,d_{\J}^{(k)}) \to T^*$ Lipschitz, there exists $\M \in [\N]^\omega$ so that $\diam (f([\M]^k))\le C\Lip (f)$. It is easily seen that the same is true if $T^*$ is replaced by any reflexive asymptotic-$c_0$ space. It is also clear that this concentration property for Lipschitz maps from the Johnson graphs implies the reflexivity of the target space. We do not know if it implies that it is asymptotic-$c_0$. We do not know either whether the equi-coarse embedabbility of the Johnson graphs and of the Hamming graphs are equivalent conditions for a Banach space. The reason is that canonical embeddings of the Johnson graphs are built on sequences and not arrays. This confirms the qualitative difference between asymptotic models and spreading models.
\end{remark}

\section{Quasi-reflexive asymptotic-$c_0$ spaces}\label{quasi-reflexive}
Let us first recall that a Banach space is said to be {\it quasi-reflexive}
if the image of its canonical embedding into its bidual is of finite
codimension in this bidual. For an infinite subset $\M$ of $\N$, we denote $I_k(\M)$ the set of strictly interlaced pairs in $[\M]^k$, namely :
$$I_k(\M)=\big\{(\mb,\nb) \in [\M]^k \times [\M]^k,\ m_1 < n_1 < m_2 < n_2 < \cdots < m_k < n_k\big\}.$$
Note that for $(\mb,\nb) \in I_k(\M)$, $d^{(k)}_\H(\mb,\nb)=k$.
Our next result mixes arguments from Lemma \ref{L:4.2} of this paper and of Theorem 2.2
in \cite{LancienRaja2018}.

\begin{thm} Let $C\ge 1$ and $X$ be a quasi-reflexive
$C$-asymptotic-$c_0$ Banach space. Then, for any Lipschitz map
$f:([\N]^k,d^{(k)}_\H) \to X$, there exists $\M \in [\N]^\omega$ such that
$$\forall (\mb,\nb)\in I_k(\M),\ \|f(\mb)-f(\nb)\|\le 3(C+1) \Lip(f).$$
In particular, the family $([\N]^k,d^{(k)}_\H)_{k\in \N}$ does not
equi-coarsely embed into $X$.
\end{thm}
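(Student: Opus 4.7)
The plan is to adapt the tree-based argument behind Theorem \ref{T:4.3} to the quasi-reflexive setting by working in the bidual $X^{**}$ and exploiting the finite-dimensionality of the ``ghost'' $X^{**}/X$. First, after reducing to the case of $X$ separable by passing to the closed linear span of $f([\N]^k)$, I would replicate Lemma \ref{L:4.1} replacing weak limits in $X$ by weak*-limits in $X^{**}$; weak*-sequential compactness of bounded sets in $X^{**}$ is available because $X$ is quasi-reflexive, so bounded sequences in $X$ are weakly Cauchy (hence weak*-convergent in $X^{**}$) after passing to a subsequence. This yields $\M \in [\N]^\omega$ and a tree $(y_\mb : \mb \in [\M]^{\le k})$ in $X^{**}$ with $f(\mb)=\sum_{\lb \preceq \mb} y_\lb$, whose nodes $(y_{\mb' \cup \{j\}})_{j>\max\mb'}$ are weak*-null in $X^{**}$, and with $\|y_\mb\|_{X^{**}} \le \Lip(f)$ for $|\mb|\ge 1$ by weak*-lower-semicontinuity of the norm.

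Next, I would fix a finite-dimensional complement $E$ with $X^{**}=X \oplus E$ and decompose $y_\mb = y_\mb^X + y_\mb^E$. Using finite-dimensional compactness of $E$ combined with a diagonal Ramsey extraction, I would refine $\M$ to $\M' \in [\M]^\omega$ so that for each level $i \in \{1,\ldots,k\}$ there exists a vector $\beta^{(i)} \in E$ with $\|y_\mb^E - \beta^{(i)}\|_{X^{**}} < \epsilon$ for all $\mb \in [\M']^i$, where $\epsilon$ is chosen small enough that $k\epsilon$ is negligible compared to $\Lip(f)$. The decisive feature of interlaced pairs then emerges: for $(\mb,\nb) \in I_k(\M')$ we have $y_{\mb_i}^E, y_{\nb_i}^E \approx \beta^{(i)}$ at every level $i$, so the $E$-contributions telescope out of $f(\mb)-f(\nb)=\sum_i (y_{\mb_i}-y_{\nb_i})$ up to error of order $2k\epsilon$, even though each individual $y_\mb$ may carry a substantial $E$-part.

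Observing that $X^{**}$ inherits the $C$-asymptotic-$c_0$ property from $X$ (since any cofinite subspace of $X^{**}$ meets $X$ in a cofinite subspace and the asymptotic game played in $X$ produces unit vectors in $S_X \subset S_{X^{**}}$), I would apply a version of Lemma \ref{L:4.2} adapted to the weak*-null tree in $X^{**}$, obtaining, after a further refinement, the estimate $\|\sum_{i=1}^k y_{\mb_i}\|_{X^{**}} \le (C+1+\epsilon)\Lip(f)$ on every branch. The triangle inequality $\|f(\mb)-f(\nb)\| \le \|\sum_i y_{\mb_i}\| + \|\sum_i y_{\nb_i}\|$ combined with absorption of the small error terms then delivers $\|f(\mb)-f(\nb)\|\le 3(C+1)\Lip(f)$. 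The equi-coarse non-embedding conclusion follows immediately: for any equi-coarse embedding family $f_k$ with moduli $\rho\le\omega$, the graph-metric identity $\Lip(f_k)=\omega_{f_k}(1)\le\omega(1)$ together with $d^{(k)}_\H(\mb,\nb)=k$ on any interlaced pair yields $\rho(k)\le 3(C+1)\omega(1)$, contradicting $\rho(k)\to\infty$.

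The main obstacle is the adaptation of Lemma \ref{L:4.2} to weak*-null rather than weakly-null trees. The original proof uses the Hahn-Banach consequence that normalized weakly null sequences in $X$ approach every cofinite subspace, and this property genuinely fails for weak*-null sequences in $X^{**}$ against subspaces cut out by functionals in $X^{***}\setminus \iota(X^*)$. The resolution exploits quasi-reflexivity twice: since $X^{***}/\iota(X^*)\cong E^*$ is also finite-dimensional, every cofinite subspace of $X^{**}$ is the intersection of an annihilator of finitely many $X^*$-functionals (which the weak*-null tree approaches freely) with a subspace of codimension at most $\dim E$ defined solely by $E$-coordinates, and this residual contribution is precisely what the level-uniform $E$-refinement established earlier is designed to absorb.
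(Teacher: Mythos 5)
Your first two steps (the weak$^*$-null tree obtained from the weak$^*$ analogue of Lemma \ref{L:4.1}, and the Ramsey extraction making the $E$-components almost constant on each level) are exactly the paper's, and your final coarse non-embeddability deduction is fine. The genuine gap is your third step. The branch-wise bound you claim from an ``adapted Lemma \ref{L:4.2}'', namely $\big\|\sum_{i=1}^k y_{\{m_1,\ldots,m_i\}}\big\|\le (C+1+\varepsilon)\Lip(f)$ for all branches after a refinement, is false whenever $X$ is quasi-reflexive but not reflexive: combined with your triangle inequality it would give $\diam\big(f([\L]^k)\big)\le 2(C+1+\varepsilon)\Lip(f)$ for every Lipschitz $f$, i.e.\ the full Hamming concentration of Theorem \ref{T:4.3}, which by that very theorem forces reflexivity. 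Concretely, take a James sequence $(x_n)\subset B_X$ as in \eqref{E:4.3.3}, pass to a subsequence weak$^*$-converging to some $x^{**}\in X^{**}$, and set $f_k(\mb)=\frac12\sum_{i=1}^k x_{m_i}$; the associated weak$^*$-null tree has $y_\emptyset=\frac{k}{2}x^{**}$ and $y_{\{m_1,\ldots,m_i\}}=\frac12(x_{m_i}-x^{**})$, so the difference of the branch sums over two disjoint $\mb,\nb\in[\M]^k$ equals $f_k(\mb)-f_k(\nb)$, and on every $[\M]^k$ some branch sum has norm at least $k/8$. Your proposed ``resolution'' cannot repair this, because the obstruction is not an error term to be absorbed: each node carries an $E$-component of norm up to $\Lip(f)$, and making these components nearly constant on each level does nothing to bring an individual (normalized) node close to a cofinite subspace of $X^{**}$ that the space chooser may take inside $X$; the near-constancy only produces cancellation when you subtract two nodes of the same level. (Your side remark that $X^{**}$ is again $C$-asymptotic-$c_0$ --- the space chooser can always play inside $X$, which is closed and of finite codimension in $X^{**}$ --- is correct, but that is precisely the strategy your vector chooser cannot answer with tree nodes.)

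This is why the statement only controls interlaced pairs, and why the paper runs the asymptotic-$c_0$ game in $X$ itself on the differences of the $X$-parts along a single interlaced pair (one pair suffices by a further Ramsey argument). Writing $u_i=x_{\{m_1,\ldots,m_i\}}-x_{\{n_1,\ldots,n_i\}}$ and $v_n=z_{\{m_1,\ldots,m_{i-1},n\}}-z_{\{n_1,\ldots,n_{i-1},n+1\}}$, the level-wise $\eta$-closeness of the $E$-parts makes the $E$-part of $v_n$ of norm at most $\eta$, and weak$^*$-nullness of $(v_n)$ gives uniform smallness on the finite-dimensional weak$^*$-closed subspace $X_i^\perp\subset X^*$, whence $\limsup_n d(y_n,X_i)\le\eta$ for the subspace $X_i\in\cof(X)$ chosen by the winning strategy --- this is the one place where weak$^*$-nullness legitimately replaces weak nullness, and it works only for same-level differences, not for individual nodes. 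The strategy then forces the normalized non-negligible differences to be $(C+1)$-equivalent to the $\ell_\infty^k$ basis, the negligible ones are handled separately, and since $f$ is $X$-valued the $E$-parts cancel exactly in $f(\mb)-f(\nb)=\sum_{i=1}^k u_i$, yielding the $3(C+1)\Lip(f)$ bound. If you restructure your third step around these interlaced differences instead of branch sums, your outline becomes essentially the paper's proof.
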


\begin{proof} Let us write $X^{**}=X \oplus E$, where $E$ is a finite
dimensional space. Let $f:([\N]^k,d^{(k)}_\H) \to X$ be a Lipschitz map.
Since $f$ is countably valued and $X$ is quasi-reflexive, we may as well
assume that $X$ and therefore all its iterated duals are separable.
We may also assume that $\Lip(f)>0$.
Then mimicking the proof of Lemma \ref{L:4.1} and using weak$^*$-compactness
instead of weak-compactness we infer the existence of $\M \in[\N]^\omega$ and
of a weak$^*$-null tree $(z_{\mb}:\mb\in[\M]^{\le k})$ in $X^{**}$ so that
$f(\mb)=z_{\emptyset}+\sum_{i=1}^k z_{\{m_1,\ldots, m_i\}}$,
for all $\mb\in [\M]^k$ and $\|z_{\mb}\| \leq \mathrm{Lip}(f)$, for all
$\mb\in [\M]^{\leq k}\setminus\{\emptyset\}$. For any $\mb \in [\M]^{\leq k}\setminus\{\emptyset\}$ we write
$z_{\mb}=x_{\mb}+e_{\mb}$ with $x_{\mb}\in X$ and $e_{\mb}\in E$.

Fix now $\eta >0$. Since $E$ is finite dimensional, using Ramsey's theorem,
we may assume after further extractions that
\begin{equation}\label{Ramsey1}
\forall i\in \{1,\ldots,k\}\ \  \forall \mb,\nb \in [\M]^{i},
\ \|e_{\mb}-e_{\nb}\|\le \eta.
\end{equation}
It follows from another Ramsey argument that it is enough to construct one pair
$(\mb,\nb)\in I_k(\M)$ such that $\|f(\mb)-f(\nb)\|\le 3(C+1) \Lip(f)$.
We shall build $m_1<n_1<\cdots <m_i<n_i$ inductively, as follows. Since $X$
is $C$-asymptotic $c_0$, we shall play our usual $k$-round game. At each step $i$, the subspace player follows, as she may,
a winning strategy to force the vector player to build a sequence which is
$(C+1)$-equivalent to the canonical basis of $\ell_\infty^k$. So she picks $X_i$ in
${\rm cof}(X)$ according to her winning strategy. Then the vector player picks
$x_i\in S_{X_i}$ and ``we'' choose $m_i<n_i$ in $\M$ according to the following
scheme. The subspace player picks $X_1$ according to her strategy, the
vector player picks $x_1\in S_{X_1}$ and we just pick $m_1<n_1$ in $\M$.
Assume now that $X_1,\dots,X_{i-1}$; $x_1,\ldots,x_{i-1}$ and
$m_1<n_1<\cdots <m_{i-1}<n_{i-1}$ have been chosen for $2\le i \le k$.
For $n>n_{i-1}$, denote $y_n=x_{\{m_1,\ldots,m_{i-1},n\}}-
x_{\{n_1,\ldots,n_{i-1},n+1\}}$ and $v_n=z_{\{m_1,\ldots,m_{i-1},n\}}-
z_{\{n_1,\ldots,n_{i-1},n+1\}}$. The space player picks $X_i \in {\rm cof}(X)$
according to her strategy. Note that $X_i^\perp$ is a finite dimensional weak$^*$
closed subspace of $X^*$.

(a) Assume first that $\liminf_{n\to \infty}\|y_n\|\le \frac{1}{4k} \Lip(f)$.\\
Then we pick $n>n_{i-1}$ such that $\|y_n\|\le \frac{1}{2k} \Lip(f)$, the vector
player picks any $x_i \in S_{X_i}$ and we set $m_i=n$ and $n_i=n+1$.

(b) Assume now that $\liminf_{n\to \infty}\|y_n\|> \frac{1}{4k} \Lip(f)$.\\
Since $(v_n)$ is weak$^*$-null, we have that $(v_n)$ tends uniformly to $0$ on
bounded subsets of $X_i^\perp$. It follows from (\ref{Ramsey1}) and the standard
identification of $(X/X_i)^*$ with $X_i^\perp$ that
$\limsup_{n\to \infty} d(y_n,X_i)\le \eta$. So we can pick $n>n_{i-1}$ such that
$\|y_n\|>\frac{1}{4k} \Lip(f)$ and $d(y_n,X_i)\le 2\eta$, which implies the
existence of $x_i \in S_{X_i}$ so that $\big\| \frac{y_n}{\|y_n\|} -x_i\big\|\le
\frac{16k \eta}{\Lip(f)}$. We set $m_i=n$ and $n_i=n+1$.

This concludes the description of our procedure and we recall that it ensures that
$(x_i)_{i=1}^k$ is $(C+1)$-equivalent to the canonical basis of $\ell_\infty^k$.
We now denote $A$ the set of $i$'s such that procedure (a) has been followed and $B$
the complement of $A$. For simplicity, denote $u_i=x_{\{m_1,\ldots,m_{i}\}}-
x_{\{n_1,\ldots,n_{i}\}}$. We clearly have

\begin{equation*}
\|\sum_{i\in A} u_i \|\le \frac12 \Lip(f).
\end{equation*}
On the other hand, we have
\begin{align*}
\|\sum_{i\in B} u_i \|&\le \big\|\sum_{i\in B} \|u_i\|x_i \big\|+
\big\|\sum_{i\in B} \|u_i\|\big(x_i-\frac{u_i}{\|u_i\|}\big) \big\|\\
&\le (C+1) \max_{i\in B}\|u_i\| +k \max_{i\in B}\|u_i\|\frac{16 k \eta}{\Lip(f)}\\
&\le \big(2 Lip(f)+\eta\big)\big(C+1+\frac{16 k^2 \eta}{\Lip(f)}\big).
\end{align*}
Note that, since $f$ takes values in $X$, we also have that
$f(\mb)-f(\nb)= \sum_{i=1}^k u_i$. Then, combining the above estimates  with an initial choice of a small enough $\eta$, we get that $\|f(\mb)-f(\nb)\|\le 3(C+1)\Lip(f)$.

\end{proof}

We deduce the following.

\begin{cor} There exists a Banach space $X$ which is not reflexive, but such that
the family $([\N]^k,d^{(k)}_\H)_{k\in \N}$ does not
equi-coarsely embed into $X$.
\end{cor}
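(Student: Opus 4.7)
The corollary will follow from the preceding theorem once we exhibit a Banach space that is quasi-reflexive and asymptotic-$c_0$, but not reflexive. The natural candidate is a James-type quasi-reflexification of Tsirelson's space $T^*$. For a scalar sequence $x = (\xi_n)$ vanishing at infinity, set
\[
\|x\|_{JT^*} = \sup \Big\| \sum_{i=1}^{k-1} (\xi_{p_{i+1}} - \xi_{p_i}) e_i \Big\|_{T^*},
\]
where the supremum runs over all strictly increasing $p_1 < \cdots < p_k$ in $\N$ and $(e_i)$ is the canonical basis of $T^*$; let $JT^*$ denote the resulting completion.

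Following the classical James template, one first checks that the coordinate vectors $(u_n)$ form a shrinking basis of $JT^*$ and that $JT^{**}/JT^*$ is one-dimensional (spanned by the ``constant sequence'' functional), so $JT^*$ is quasi-reflexive of order one and in particular not reflexive. The delicate point is to verify that $JT^*$ is asymptotic-$c_0$. For a skipped block sequence $(x_j)_{j=1}^N$ with respect to $(u_n)$, the key step is to bound $\|\sum_j x_j\|_{JT^*}$ by a $T^*$-norm of a block sequence in $T^*$ of comparable length: because each $x_j$ has finite support and consecutive supports are separated, any interlacing sequence $p_1 < \cdots < p_k$ entering the supremum defining the $JT^*$-norm can be split according to the supports of the $x_j$'s, and the $1$-unconditionality of the $T^*$-basis allows one to recombine the pieces into a block sequence in $T^*$. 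Then the admissibility estimate \eqref{E:6.2} for $T^*$ yields an inequality of the form $\|\sum_{j=1}^N x_j\|_{JT^*} \leq C \max_j \|x_j\|_{JT^*}$ whenever $N \leq \min \supp(x_1)$. Combining this with the standard reduction of weakly null trees to skipped block trees in a space with a shrinking basis (and invoking Remarks \ref{R:3.2}(d)) gives that $JT^*$ is asymptotic-$c_0$.

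Once $X = JT^*$ has been shown to satisfy both properties, the preceding theorem applies directly: the family $([\N]^k, d^{(k)}_\H)_{k \in \N}$ does not equi-coarsely embed into $X$, and since $X$ is not reflexive the corollary is proved. The main obstacle lies in the block estimate for the $JT^*$-norm: unlike in the classical James space over $\ell_2$, no single closed-form expression for the norm is available, and one must simultaneously exploit the $1$-unconditionality of the $T^*$-basis, the admissibility condition built into \eqref{E:6.1}, and the disjointness of the supports of skipped blocks to show that the supremum over interlacing index sequences does not inflate the norm beyond what admissibility in $T^*$ can absorb.
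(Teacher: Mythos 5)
Your overall strategy coincides with the paper's: Jamesify $T^*$ to produce a quasi-reflexive, non-reflexive, asymptotic-$c_0$ space and then apply the preceding theorem. However, the specific norm you propose does not work, and the step you yourself single out as ``the main obstacle'' is in fact false for it. In your definition the differences $\xi_{p_{i+1}}-\xi_{p_i}$ are placed on the initial segment $e_1,\dots,e_{k-1}$ of the $T^*$-basis, irrespective of where the indices $p_i$ lie. This destroys exactly the positional information that the admissibility estimate \eqref{E:6.2} requires: that estimate is only available when the number of blocks is at most the minimum of the support of the vector being evaluated in $T^*$, whereas in your norm every evaluation happens at the very beginning of the basis. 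Concretely, take the skipped block sequence $x_j=\tfrac12(u_{a_j}-u_{b_j})$, $j=1,\dots,N$, with $N\le a_1<b_1<a_2<b_2<\cdots<a_N<b_N$ (and gaps between consecutive supports). Each $x_j$ satisfies $1\le\|x_j\|_{JT^*}\le 2$, but choosing $p_1=a_1,\,p_2=b_1,\,\dots,\,p_{2N-1}=a_N,\,p_{2N}=b_N$ produces the difference vector $\sum_{i=1}^{2N-1}(-1)^ie_i$, so by the $1$-unconditionality of the $T^*$-basis, $\big\|\sum_{j=1}^N x_j\big\|_{JT^*}\ge\big\|\sum_{i=1}^{2N-1}e_i\big\|_{T^*}$, and this quantity is unbounded in $N$ (otherwise the $T^*$-basis would be equivalent to the $c_0$-basis). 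Since this happens no matter how far out the blocks sit, your claimed inequality $\|\sum_{j=1}^N x_j\|_{JT^*}\le C\max_j\|x_j\|_{JT^*}$ for $N\le\min\supp(x_1)$ fails for every $C$. Moreover, if the basis of your space is shrinking, as you claim, then such vectors are weakly null and yield weakly null trees of height $k$ all of whose branches (and hence all branches of any refinement) fail $C$-equivalence to the $\ell_\infty^k$-basis with $C$ independent of $k$, so your space is not asymptotic-$c_0$; if the basis were not shrinking, your quasi-reflexivity argument would collapse instead. Either way there is a genuine gap.

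The paper sidesteps this by using the Bellenot--Haydon--Odell Jamesification $J[(u_i)]$ of the $T^*$-basis: the interval sums $\sum_{j\in s_i}a_j$ are placed at $u_{p_i}$ with $p_i=\min s_i$, so in the $T^*$-evaluation the coefficients occupy positions comparable to the support of the original vector, and the admissibility in \eqref{E:6.2} becomes usable for vectors supported far out. Quasi-reflexivity of order one is then precisely the theorem of \cite{BHO}, and the asymptotic-$c_0$ property follows from estimates as in Proposition 3.2 of \cite{FOSZ2017}. To repair your argument you should replace your norm by this one (or by a variant in which the $i$-th coefficient is anchored at a position comparable to $p_i$ rather than at $e_i$); with the coefficients pinned to the beginning of the basis the construction cannot succeed.
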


\begin{proof} We only need to give an example of a quasi-reflexive, but not
reflexive, asymptotic-$c_0$ Banach space. It is based on a construction due to Bellenot, Haydon and Odell \cite{BHO}. For a given Schauder basis $(u_i)$ of a Banach space $X$, the
space $J[(u_i)]$ is defined to be the completion of $c_{00}$ (the space of
finitely supported sequences $(a_i)_{i=1}^\infty$ of real numbers) under the norm
$$\big\|\sum a_ie_i\big\|= \sup\Big\{\Big\|\sum_{i=1}^n \Big(\sum_{j\in s_i}a_j\Big) u_{p_i}\Big\|_X,\ n\in \N, s_1<\cdots <s_n,
\ \min s_i=p_i\Big\},$$
where $s_1,\ldots,s_n$ are intervals in $\N$ and $(e_i)_{i=1}^\infty$ is the canonical basis of $c_{00}$.

It is proved in \cite{BHO} that, if $(u_i)$ is the basis of a reflexive space,
then $J[(u_i)]$ is quasi-reflexive of order one. Let now $(u_i)$ be the unit
vector basis of $T^*$ (see the description of $T^*$ in section \ref{S:3}).
Since $T^*$ is reflexive, $J[(u_i)]$ is quasi-reflexive
of order one and therefore not reflexive. This particular space was first considered in \cite{FOSZ2017} and  estimates {similar} to those given in the proof of Proposition 3.2 in \cite{FOSZ2017} show that $J[(u_i)]$
is asymptotic-$c_0$.
\end{proof}

\bibliographystyle{amsplain}

\begin{bibsection}
\begin{biblist}

\bib{AGLM2017}{article}{
  author={Argyros, S. A.},
  author={Georgiou, A.},
  author={Lagos, A.-R.},
  author={Motakis, P.},
  title={\rm Joint spreading models and uniform approximation of bounded operators},
  journal={},
  volume={},
  year={2018},
  number={},
  pages={},
  eprint={arXiv:1712.07638},
}

\bib{ArgyrosKanellopoulosTyros2013}{article}{
  author={Argyros, S. A. },
  author={Kanellopoulos, V. },
  author={Tyros, K.},
  title={\rm Finite order spreading models},
  journal={\it Adv. Math.},
  fjournal={Advances in Mathematics},
  volume={234},
  year={2013},
  pages={574--617},
  issn={0001-8708},
  mrclass={46B06 (46B03 46B25 46B45)},
  mrnumber={3003938},
}

\bib{BaudierLancienSchlumprecht2018}{article}{
  author={Baudier, F.},
  author={Lancien, G.},
  author={Schlumprecht, Th.},
  title={\rm The coarse geometry of {T}sirelson's space and applications},
  journal={\it J. Amer. Math. Soc.},
  fjournal={Journal of the American Mathematical Society},
  volume={31},
  year={2018},
  number={3},
  pages={699--717},
  issn={0894-0347},
  mrclass={46B20 (05C63 20F65 46B85 46T99)},
  mrnumber={3787406},
}

\bib{BeauzamyLapreste1984}{book}{
  author={Beauzamy, B.},
  author={Laprest\'e, J.-T.},
  title={Mod\`eles \'etal\'es des espaces de {B}anach},
  series={Travaux en Cours. [Works in Progress]},
  publisher={Hermann, Paris},
  year={1984},
  pages={iv+210},
  isbn={2-7056-5965-X},
  mrclass={46B20},
  mrnumber={770062},
  mrreviewer={G. J. O. Jameson},
}

\bib{BHO}{article}{
  author={Bellenot, S.F.},
  author={Haydon, R.},
  author={Odell, E.},
  title={\rm Quasi-reflexive and tree spaces constructed in the spirit of
  R.C. James},
  journal={\it Contemp. Math.},
  volume={85},
  year={1989},
  note={Amer. Math. Soc., Providence, RI},
}

\bib{BrunelSucheston1974}{article}{
  author={Brunel, A.},
  author={Sucheston, L.},
  title={\rm On $B$-convex Banach spaces},
  journal={\it Math. Systems Theory},
  volume={7},
  date={1974},
  pages={294\ndash 299},
}

\bib{FigielJohnson1974}{article}{
  author={Figiel, T.},
  author={Johnson, W. B.},
  title={\rm A uniformly convex Banach space which contains no $l\sb {p}$},
  journal={\it Compositio Math.},
  volume={29},
  date={1974},
  pages={179\ndash 190},
}

\bib{FOSZ2017}{article}{
  author={Freeman, D.},
  author={Odell, E.},
  author={Sari, B.},
  author={Zheng, B.},
  title={\rm On spreading sequences and asymptotic structures},
  journal={\it Trans. Amer. Math. Soc.},
  volume={370},
  date={2018},
  pages={6933--6953},
}

\bib{Gromov1984}{article}{
  author={Gromov, M.},
  title={\rm Infinite groups as geometric objects},
  booktitle={\it Proceedings of the {I}nternational {C}ongress of {M}athematicians, {V}ol.\ 1, 2 ({W}arsaw, 1983)},
  pages={385--392},
  publisher={PWN, Warsaw},
  year={1984},
}

\bib{GLZ2014}{article}{
  author={Godefroy, G.},
  author={Lancien, G.},
  author={Zizler, V.},
  title={\rm The non-linear geometry of {B}anach spaces after {N}igel {K}alton},
  journal={\it Rocky Mountain J. Math.},
  fjournal={The Rocky Mountain Journal of Mathematics},
  volume={44},
  year={2014},
  number={5},
  pages={1529--1583},
}

\bib{HalbeisenOdell2004}{article}{
  author={Halbeisen, L.},
  author={ Odell, E.},
  title={\rm On asymptotic models in {B}anach spaces},
  journal={\it Israel J. Math.},
  fjournal={Israel Journal of Mathematics},
  volume={139},
  year={2004},
  pages={253--291},
  issn={0021-2172},
  mrclass={46B15 (05D10 46B20 46B45)},
  mrnumber={2041794},
  mrreviewer={George Androulakis},
  url={https://doi-org.lib-ezproxy.tamu.edu:9443/10.1007/BF02787552},
}

\bib{James1950}{article}{
  author={James, R. C.},
  title={\rm Bases and reflexivity of Banach spaces},
  journal={\it Ann. of Math. (2)},
  volume={52},
  date={1950},
  pages={518\ndash 527},
}

\bib{James1972}{article}{
  author={James, R. C.},
  title={\rm Some self-dual properties of normed linear spaces},
  booktitle={\it Symposium on Infinite-Dimensional Topology (Louisiana State Univ., Baton Rouge, La., 1967)},
  pages={159\ndash 175. Ann. of Math. Studies, No. 69},
  publisher={Princeton Univ. Press},
  place={Princeton, N.J.},
  date={1972},
}

\bib{Kalton2007}{article}{
  author={Kalton, N. J.},
  title={\rm Coarse and uniform embeddings into reflexive spaces},
  journal={\it Quart. J. Math. (Oxford)},
  volume={58},
  date={2007},
  pages={393\ndash 414},
}

\bib{Kapovich2014}{article}{
  author={Kapovich, M.},
  title={\rm Lectures on quasi-isometric rigidity},
  booktitle={\it Geometric group theory},
  series={IAS/Park City Math. Ser.},
  volume={21},
  pages={127--172},
  publisher={Amer. Math. Soc., Providence, RI},
  year={2014},
}

\bib{KnaustOdellSchlumprecht1999}{article}{
  author={Knaust, H.},
  author={Odell, E.},
  author={Schlumprecht, Th.},
  title={\rm On asymptotic structure, the Szlenk index and UKK properties
  in Banach spaces},
  journal={\it Positivity},
  volume={3},
  date={1999},
  pages={173--199},
}

\bib{LancienRaja2018}{article}{
  author={Lancien, G.},
  author={Raja, M.},
  title={\rm Asymptotic and coarse Lipschitz structures of quasi-reflexive
  Banach spaces},
  journal={\it Houston J. of Math},
  volume={44},
  date={2018},
  number={3},
  pages={927--940},
}

\bib{MaureyMilmanTomczak1995}{article}{
  author={Maurey, B.},
  author={Milman, V. D.},
  author={Tomczak-Jaegermann, N.},
  title={\rm Asymptotic infinite-dimensional theory of Banach spaces},
  conference={ title={\it Geometric aspects of functional analysis}, address={Israel}, date={1992--1994}, },
  book={ series={Oper. Theory Adv. Appl.}, volume={77}, publisher={Birkh\"auser}, place={Basel}, },
  date={1995},
  pages={149--175},
}

\bib{MendelNaor2008}{article}{
  author={Mendel, M.},
  author={Naor, A.},
  title={\rm Metric cotype},
  journal={\it Ann. of Math.(2)},
  volume={168},
  date={2008},
  pages={247\ndash 298},
}

\bib{Nowak2006}{article}{
    AUTHOR = {Nowak P. W.},
     TITLE = {\rm On coarse embeddability into {$l_p$}-spaces and a
              conjecture of {D}ranishnikov},
   JOURNAL = {\it Fund. Math.},
    VOLUME = {189},
      YEAR = {2006},
    NUMBER = {2},
     PAGES = {111--116},
}

\bib{OdellSchlumprecht2002}{article}{
  author={Odell, E.},
  author={Schlumprecht, Th.},
  title={\rm Trees and branches in Banach spaces},
  journal={\it Trans. Amer. Math. Soc.},
  volume={354},
  date={2002},
  pages={4085--4108 (electronic)},
}

\bib{Ostrovskii2009}{article}{
  author={Ostrovskii, M. I.},
  title={\rm Coarse embeddability into {B}anach spaces},
  journal={\it Topology Proc.},
  volume={33},
  year={2009},
  pages={163--183},
}

\bib{Randrianarivony2006}{article}{
  author={Randrianarivony, N. L.},
  title={\rm Characterization of quasi-Banach spaces which coarsely embed into a Hilbert space},
  journal={\it Proc. Amer. Math. Soc.},
  volume={134},
  date={2006},
  pages={1315--1317 (electronic)},
}

\bib{Tsirelson1974}{article}{
  author={Tsirel'son, B. S.},
  title={\rm Not every Banach space contains an imbedding of $l_p$ or $c_0$},
  journal={\it Funct. Anal. Appl.},
  year={1974},
  volume={8},
  number={2},
  pages={138--141},
}

\end{biblist}

\end{bibsection}

\end{document}